\let\@fnsymbol\@arabic
\renewcommand{\theequation}{\arabic{section}.\arabic{equation}}
\newcommand{\tphi}{\tilde{\varphi}}
\newcommand{\tpsi}{\tilde{\psi}}
\newcommand{\tw}{\tilde{w}}  
\newcommand{\Z}{\textbf{Z}}
\newcommand{\ls}{\leqslant}
\newcommand{\<}{\left\langle}
\newcommand{\PI}{\right\rangle}
\newcommand{\dt}{\mbox{d}t}
\newcommand{\dx}{\mbox{d}x}
\newtheorem{Theorem}{Theorem}[section]
\newtheorem{Lemma}{Lemma}[section]
\newtheorem{Remark}{Remark}[section]
\newtheorem{Definition}{Definition}[section]
\newtheorem{Proposition}{Proposition}[section]
\begin{document}
	
\title{\Large \bf Attractors for locally damped Bresse systems and a unique continuation property}

\author{\normalsize 
To Fu Ma \\ {\small Department of Mathematics, University of Bras\'{\i}lia, 70910-900 Bras\'{\i}lia, DF, Brazil}  \\ \\ 
Rodrigo N. Monteiro \thanks{Corresponding author.} \\ {\small Department of Mathematics, State University of Londrina, 86057-970 Londrina, PR, Brazil} \\ \\
Paulo N. Seminario-Huertas \\ {\small Department of Mathematics, University of Bras\'{\i}lia , 70910-900 Bras\'{\i}lia, DF, Brazil} }

\date{}

\maketitle

\begin{abstract} 
This paper is devoted to Bresse systems, a robust model for circular beams, given by a set of three coupled wave equations. 
The main objective is to establish the existence of global attractors for dynamics of semilinear problems with localized damping. In order to deal with localized damping a unique continuation property (UCP) is needed. Therefore we also provide a suitable UCP for Bresse systems. Our strategy is to set the problem in a Riemannian geometry framework and see the system as a single equation with different Riemann metrics. Then we perform Carleman-type estimates to get our result. 
\end{abstract}

\noindent {\bf Keywords}: Bresse system, unique continuation, localized damping, Riemannian manifold, global attractor.


\tableofcontents

\section{Introduction}

The Bresse system is a model for circular beams given by three coupled wave equations, namely, 
\begin{align*}
\left\{ 
\begin{array}{rrr}
\rho_{1}\varphi_{tt} - k(\varphi_{x}+\psi+\ell w)_{x} - k_{0}\ell (w_{x}-\ell\varphi) = 0  \;\;  \mbox{in} \:\: (0,L) \times (0,\infty) , \\
\rho_{2}\psi_{tt} - b\psi_{xx}+k(\varphi_{x} + \psi+\ell w) = 0   \;\;  \mbox{in} \: \:(0,L) \times (0,\infty),   \\
\rho_{1}w_{tt} - k_{0}(w_{x}-\ell\varphi)_{x} + k\ell(\varphi_{x}+\psi+\ell w) =0  \;\; \mbox{in} \: \:(0,L) \times (0,\infty). 
\end{array}
\right.
\end{align*}
The functions $\varphi =\varphi(x,t)$, $\psi=\psi(x,t)$, $w = w(x,t)$ correspond to the vertical displacement, shear angle and longitudinal displacement at a point $x \in  (0,L)$ and time $t\geqslant 0$, respectively. The coefficients are all positive constants defined by   
$\rho_{1}= \rho A$, $\rho_{2}=\rho I$, $k= K A G$, $b= EI$ and $k_0 = AE$, 
where the quantities $\rho$, $A$, $I$, $K$, $G$ and $E$ denote respectively, material density, cross-sectional area, 
second moment of the cross-section area, a shear factor, shear modulus and modulus of elasticity. 
In addition, $\ell >0$ denotes the beam's curvature. 
Its mathematical modeling can be found in \cite{Bresse,lagnese-book} and it is worth observing that when $\ell =0$ the arched beam reduces to the Timoshenko beam \cite{Timo}. 

The Bresse system and its viscoelastic and thermoelastic extensions were studied by many authors. 
Roughly speaking, most of results are concerned with a certain asymptotic stability dichotomy. Indeed, 
by analogy to the Timoshenko system, the Bresse system having damping terms in one or two of its equations is exponentially stable if and only if satisfies the equal wave speeds condition  
\begin{equation} \label{equal-speed}
\frac{\rho_1}{k} = \frac{\rho_2}{b} \quad \mbox{and} \quad k=k_0. 
\end{equation}
Such a condition was firstly observed for Timoshenko systems in \cite{Sou}. Otherwise only polynomial stability can be obtained. See e.g. \cite{alabau,delloro,fatori-monteiro,fatori-munoz,liu-rao,santos-soufyane,soufyane-said,wehbe}. 
 
In a different direction, Charles et al \cite{charles-soriano} proved the exponential stability of Bresse systems 
by adding a localized damping in each one of its three equations, without assuming the speed condition \eqref{equal-speed}. 
This is quite interesting since the equal speed assumption can not be realized physically, cf. \cite{olsson}. 

The main objective of this paper is to establish existence of global attractors for dynamics of a semilinear Bresse system with locally defined damping (see problem \ref{P01}), without assuming condition \eqref{equal-speed}. Our approach is very different from the above one in \cite{charles-soriano}. Indeed, one of ingredients for obtaining exponential stability of wave equations with localized damping is a unique continuation property (UCP). To our purpose, the UCP says whether a wave equation that vanishes in a subdomain must be identically null. In \cite{charles-soriano} they have used a UCP derived from Holmgren uniqueness theorem, which is only valid for equations with analytic coefficients. Because of nonlinear terms, our problem \eqref{P01} has no longer analytic coefficients. To overcome this difficult we propose a new UCP for Bresse systems. 

More precisely, we discuss the unique continuation property for coupled wave equations of the form 
\begin{align} \label{P}
\left\{ 
\begin{array}{lll}
\begin{split}
\partial^2_{t} u_1-\Delta_1 u_1=f_1(u_1,u_2,\cdots,u_n)&  \,\,\, \mbox{in} \,\,\, (0,L)\times(0,\infty),\\
\partial^2_{t} u_2-\Delta_2 u_2=f_2(u_1,u_2,\cdots,u_n)&  \,\,\, \mbox{in} \,\,\, (0,L)\times(0,\infty),\\
&\,\,\,\,\,\vdots\\
\partial^2_{t} u_n\!-\Delta_n u_n=f_n(u_1,u_2,\cdots,u_n)& \,\,\, \mbox{in} \,\,\, (0,L)\times(0,\infty),
\end{split}
\end{array}
\right.
\end{align}
where for $i=1,\cdots,n$ the following is assumed:
\begin{enumerate}
	\item We consider on the system \eqref{P} the Dirichlet boundary conditions
	\begin{align}\label{boundary}
	u_{i}(0,t)=u_{i}(L,t)= 0 \,\,\,\, \forall t \in (0,\infty),
	\end{align}
	and initial data
	\begin{align}\label{initial}
	(u_{1},\partial_{t}u_{1},\cdots, u_{n},\partial_{t}u_{n})\bigl|_{t=0}=(u_{1}^{0},u_{1}^{1},\cdots,u_{n}^{0},u_{n}^{1})
	\,\,\mbox{in}\,\,(0,L).
	\end{align}
	
	\item Given $\gamma_i>0$, the operator $\Delta_i$ represents the one-dimensional Laplacian operator with wave propagation velocity $\sqrt{\gamma_i}$, defined by
	\begin{equation} \label{delta-i}
	\Delta_i=\gamma_{i}\partial^2_{x}.
	\end{equation}
	
	\item The symbols $f_{i}$ denote the coupling functions with energy level terms such that $f_i \in L^2(0,T;L^2(0,L))$ and
	\begin{equation} \label{f_i}
	f_i(u_1,u_2,\cdots,u_n)=\sum^n_{j=1}p^i_j \partial_x u_j+\sum^n_{j=1}q^i_j u_j,
	\end{equation}
	where $p^i_j,q^i_j \in L^2(0,T;L^2(0,L))$.
	Additionally, given $T>0$ there exists a constant $C_T>0$ such that
	\begin{equation} \label{F}
	\int_{0}^{T}\int_{0}^{L}|f_i(u_1,u_2,\cdots,u_n)|^2 \dx\dt \le C_T \int_{0}^{T} F_u(t)\dt,
	\end{equation}
	where $F_u(t)$ represents the energy of the system \eqref{P} defined by
	\begin{equation} \label{E}
	F_u(t)=\sum_{i=1}^{n}F_{u,i}(t)
	\end{equation}
	and $F_{u,i}(t)$ is the energy of the $i$-th equation of the system given by
	\begin{equation} \label{Ei}
	F_{u,i}(t)=\int_{0}^{L}\Bigl[|u_i|^2+ \gamma_{i}|\partial_x u_{i}|^2+|\partial_t u_i|^2 \Bigl]\dx.
	\end{equation}
\end{enumerate} 

The UCP has been extensively used in the analysis of exact controllability, exponential stability, and in the theory of attractors for locally damped wave equations.
The stabilization problem for linear wave equations on a smooth compact Riemannian manifold was studied, e.g., in \cite{blr, rt-cpam, rt-indiana}. 
In these papers, to show the exponential decay rates for the energy the authors assume localized damping and their proof uses a UCP for the wave equation based on Holmgren Theorem. In \cite{TAMS, ARMA} the authors treated the nonlinear case exhibiting an exponential decay of the energy with \textit{sharp} damping region, roughly speaking, a damping region with  arbitrarily small measure. In this case, a new UCP is proven by means of energy estimates and a escape vector field based on \cite{L-Magenes}. 

The study of the existence of global attractors for the wave equation with external forces of critical exponent and locally distributed damping has been established in \cite{Pau, clt, fz, Ma-Se}. Despite dealing with wave equations, the significant difference between these models is the damping regions imposed on the damping parameter. Therefore different types of UCP are needed. Reference \cite{fz} uses the UCP given in \cite{ruiz}. 
In \cite{clt, Ma-Se}, the authors apply the UCP corresponding to Carleman-type estimates for wave equations with linear potential and in \cite{Pau} the authors introduced a new UCP using the techniques of \cite{Triggiani}. 

Here, the main idea for proving a new UCP for Bresse systems is to set Problem \eqref{P} in a 
Riemannian geometry framework and see the system as a single equation with different Riemann metrics.   
Then we show how Carleman-type estimates obtained in \cite{Triggiani} 
can be used to obtain a UCP for our system $(\ref{P})$ under above assumptions 
on the functions $f^{i}(\cdot)$ $(i=1,2,\cdots,n)$. 

Our paper is organized in the following way. Our UCP - Theorem \ref{mainUCP} - will follow after a series of comparison results between reference \cite{Triggiani}. $(i)$ We begin in Section 2 with a Riemannian geometry background material.  
$(ii)$ After, in Section 3, we introduce a preliminary material that will lead the corresponding Carleman-type estimates for the Problem $(\ref{P})$. Finally, as a consequence of Carleman-type estimates, we then achieve our goal, the proof of the Theorem \ref{mainUCP}. For completeness, to the best of our knowledge, this is the first UCP result for coupled wave equations. 

In the second part of the paper, we establish the existence of global attractors for the Bresse system with a nonlinear foundation 
and nonlinear localized dissipation - see Problem \eqref{P01}. We note that in \cite{MaMonteiro} the authors studied a Bresse system with nonlinear foundation and dissipation acting on the whole domain. There, UCP and observability inequalities were not necessary. In this sense, our application improves the previous results on the existence of long-time dynamics 
of the Bresse system allowing the dissipation to be localized in an arbitrary subset of $(0, L)$. 

The outline of the remainder of the paper is the following: 
$(iii)$ In Section 4, we introduce the semilinear Bresse system with localized dissipation along with the well-posedness result and energy estimates. 
The main result is the Theorem \ref{Main} and whose proof is based on the following strategy: 
$(a)$ we first show the existence of a strictly Lyapunov function for the associated dynamical system by using the new UCP stated in  Theorem \ref{mainUCP} and 
$(b)$ introducing observability inequalities, we prove the asymptotic smoothness of the problem using the abstracts results on the recent theory of quasi-stable systems \cite{Yellow}. 
Here, we also mention the importance of the UCP for the proof of a strictly Lyapunov function - see Definition \ref{Lya}.  
$(iv)$ The Appendix is devoted to the well-posedness result for wave equations with over-determined conditions.

\section{A Riemannian geometry framework}

\subsection{Basic notation}
Let $(M,g)$ be an $n$-dimensional, compact Riemannian manifold, with smooth boundary and smooth metric. 
The tangent space on $M$ at $p$ is denoted by $T_p M$ and fix a coordinate system $(x_1,\cdots,x_n)$ then $(\partial_{x_1},\cdots,\partial_{x_n})$ represents the associated coordinate vector fields. In this case
$$
g(X,Y)=\<X,Y\PI=\sum_{i,j=1}^{n}g_{ij}\alpha_i \beta_j, \ \ |X|^2=\<X,X\PI,
$$
where
\begin{equation} \label{XY}
X=\sum_{i=1}^{n}\alpha_i \partial_{x_i}, \ \ Y=\sum_{i=1}^{n}\beta_i \partial_{x_i} \ \ {\text{in $T_pM$ for some $p \in M$}},
\end{equation}
and
$$
g_{ij}=\<\partial_{x_i},\partial_{x_j}\PI.
$$
Note that $|\cdot|$ represents the norm with respect to the metric $g(\cdot,\cdot)$. 
In particular, we denote the inner product $g(\cdot, \cdot)$ by the matrix  $(g_{ij})_{n\times n}$ and its inverse by $(g^{ij})_{n \times n}$. 

\noindent The tangent and cotangent bundle of $M$ are respectively detonate by $TM$ and $T^*M$. 
The symbol $D$ denotes the Levi-Civita connection of $M$ such that for two vector fields $X$ and $Y$ on $M$ given by (\ref{XY}) the following equality hods true
$$
D_X Y=\sum_{i,k=1}^{n}\left[ \alpha_i \partial_{x_i} \beta_k \partial_{x_k}+\sum_{j=1}^{n}\beta_j \Gamma^k_{ij}\partial_{x_k}\right],
$$
where $\Gamma^{k}_{ij}$ represent the Christoffel symbols.

\noindent Let $f:M \to \mathbb{R}$ and $H \in T_p M$ for all $p \in M$. 
\begin{enumerate}
	\item If $f \in C^1(M)$ then the differential $D f:TM \to \mathbb{R}$ represents the gradient of the connection $D$ on $f$ and 
	$$
	D f(H)=D_H f=H(f)=\<\nabla f,H\PI,
	$$
	where $\nabla$ is the usual gradient defined in a coordinate system by
	\begin{equation} \label{gra}
	\nabla f=\sum^{n}_{i,j=1} g^{ij}\partial_{x_i}f\partial_{x_j}.
	\end{equation}

Thanks to the musical isomorphism we will identify $Df$ with $\nabla f$. Here, we often denote $Df$ by $\nabla f$. 
In particular, if $\{E_1,\cdots,E_n\}$ represents an orthonormal basis of $T_p M$ and $H=\sum_{i=1}^{n}h_i E_i$ then
	$$
	Df(H)=H(f)=\sum_{i=1}^{n}h_i E_i(f).
	$$
	\item  If $f \in C^2(M)$ then $D^2 f$ represents the Hessian of $f$ such that for all $Y \in TM$
	$$
	D^2 f(\cdot,Y)=D(D f)(\cdot,Y)=D_Y(\nabla f(\cdot)):TM \to \mathbb{R},
	$$
 with
	$$
	D_Y(\nabla f(X))=\<D_X(\nabla f),Y\PI, \ \ \forall X \in TM.
	$$
	In particular
	$$
	D^2 f(X,X)=\<D_X(\nabla f),X\PI, \ \ \forall X \in TM.
	$$
	
	\item Let $f \in C^3(M)$.  The function $f$ is strictly convex in the metric $g$ if and only if $D^2 f(X,X)>0$ for all $X \in TM$.
	
	\item If $\{E_1,\cdots,E_n\}$ represents an orthonormal basis of $T_pM$, then the divergent of $H$ is defined as
	$$
	{\rm div}(H)=\sum_{i=1}^{n}\<D_{E_i} H,E_i\PI.
	$$
	
	\item If $f \in C^1(M)$ then
	$$
	{\rm div}(fH)=f {\rm div}(H)+H(f).
	$$
	
	\item For a function $f \in C^2(M)$ we define the Laplace-Beltrami operator $\Delta$ by
	$$
	\Delta f={\rm div}(\nabla f)=\sum_{i=1}^{n}\<D_{E_i}(\nabla f),E_i\PI=\sum_{i=1}^{n}D^2 f(E_i,E_i).
	$$ 
	
	\item  The covariant derivate $DH$ is the bilinear form given by
	$$
	DH(X,Y)=\<D_X H,Y\PI, \ \ \forall X,Y \in TM.
	$$
	In particular if $f \in C^2(M)$
	$$
	D(\nabla f)(X,Y)=D^2 f(X,Y), \ \ \forall X,Y \in TM.
	$$
	
	\item	If $f \in C^1(M)$ then
	$$
	\<\nabla f,\nabla(H(f))\PI=DH(\nabla f,\nabla f)+\frac{1}{2}{\rm div}(|\nabla f|^2 H)-\frac{1}{2}|\nabla f|^2{\rm div}H.
	$$
\end{enumerate}

\begin{Remark}
	Let $\Omega$ an open bounded, connected, compact subset of $M$ with smooth boundary $\partial \Omega$ 
	and $f \in C^3(M)$ a strictly convex function in the metric $g$. 
	Then by translating and rescaling {\rm \cite[Remark 1.2]{Triggiani}}, the function $f$ satisfies the following conditions
	\begin{equation} \begin{split}\label{stric} 
	& D^2f(X,X)\ge |X|^2, \ \forall p \in \Omega, \ \forall X \in T_p M, \\
	& \min_{\overline{\Omega}}f(x) \equiv m>0.  
	\end{split}
	\end{equation}
\end{Remark}

\subsection{Geometry on the wave system}

Let us consider $\mathbb{R}$ with usual topology and $x$ the natural coordinate system. 
In particular, for each $x \in \mathbb{R}$ the tangent space is $T_x\mathbb{R}=\mathbb{R}_x=\mathbb{R}$. 

\noindent For a fixed  $i \in \{1,\cdots,n\}$, we begin defining the metrics $g_{i}$ associated with $\Delta_i$ by
\begin{equation}\label{gi}
g_i(X,Y)=\<X,Y\PI_{g_{i}}=\gamma^{-1}_{i}\alpha\beta,
\end{equation}
with corresponding norm
$$
|X|_{g_i}^{2}=\<X,X\PI_{g_i},
$$
where $X=\alpha \partial_{x},\, Y=\beta\partial_{x} \in \mathbb{R}_x,$ for each $x \in \mathbb{R}$.

\noindent Recalling assumption $\gamma_i>0$, one can see that the pairs $(\mathbb{R},g_i)$ are Riemannian manifolds. 
In this case, the Levi-Civita connection of $(\mathbb{R},g_i)$ will be denoted by $D_{g_i}$. 
Here, the symbol $(\mathbb{R},g_0)$ denotes the $\mathbb{R}$ space with Euclidean metric and we use the following notations for the metric and norm
\begin{equation}\label{g0}
g_0(X,Y)=X\cdot Y\,\,\,{\rm and}\,\,\,|X|_{g_0}=|X|, \ \ \forall X, Y \in \mathbb{R}_x, \ \ \forall x \in \mathbb{R}.
\end{equation}

Important properties of the above metrics are stated in the following lemma. 
Although most of these results are followed straightforwardly from the known results, they are crucial for what follows. 
So for the convenience of the reader, we give their proofs here.

\begin{Lemma}\label{geo} Let $(0,L) \subset \mathbb{R}$ for some $L>0$. 
If $x$ be the natural coordinate system in $\mathbb{R}$, $f,h \in C^1([0,L])$ and  $X=\alpha \partial_x, Y=\beta \partial_x$ vector fields. Then
	\begin{enumerate}
		\item [$1.$] $\<X,\gamma_{i} Y\PI_{g_i}=X\cdot Y$;
		
		\item [$2.$] $\nabla_{g_i}f=\gamma_{i} \partial_x f \partial_x$;
		
		\item [$3.$] $X(f)=\<\nabla_{g_i}f,X\PI_{g_i}=\alpha \partial_x f$;
		
		\item [$4.$] $\<\nabla_{g_i}f,\nabla_{g_i}h\PI_{g_i}= \gamma_i \partial_x f \partial_x h$;
		
		\item [$5.$] if $\nu$ represents the unit outward normal vector for $([0,L],g_i)$ then
		$$
		\<\nabla_{g_i} f(0),\nu(0)\PI_{g_i}=-\sqrt{\gamma_{i}} \partial_x f(0), \ \ \<\nabla_{g_i} f(L),\nu(L)\PI_{g_i}=\sqrt{\gamma_{i}} \partial_x f(L).
		$$
	\end{enumerate}
	
\end{Lemma}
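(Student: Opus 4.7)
The plan is to treat all five items as direct computations in one-dimensional Riemannian geometry, using the definition \eqref{gi} of the metric $g_i$ together with the formulas for the gradient, inner product, and divergence recalled in Section 2.1. The central observation is that since the metric in the single coordinate $x$ has matrix $(g_i)_{11}=\gamma_i^{-1}$, its inverse satisfies $g_i^{11}=\gamma_i$. Once this identification is made, every item reduces to bookkeeping.

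First I would dispatch item 1 by writing $\gamma_i Y=\gamma_i\beta\,\partial_x$ and applying \eqref{gi} to obtain $\gamma_i^{-1}\alpha(\gamma_i\beta)=\alpha\beta$, which equals $X\cdot Y$ by \eqref{g0}. Item 2 follows from the coordinate formula (2.2) for the gradient: in dimension one the sum collapses and $\nabla_{g_i}f=g_i^{11}\partial_x f\,\partial_x=\gamma_i\partial_x f\,\partial_x$. Item 3 is then a one-line check, since $X(f)=\alpha\partial_x f$ on one hand and $\langle\nabla_{g_i}f,X\rangle_{g_i}=\gamma_i^{-1}(\gamma_i\partial_x f)\alpha=\alpha\partial_x f$ on the other, confirming the gradient characterization. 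Item 4 is a specialization of item 3 with $X=\nabla_{g_i}h$, or equivalently a direct application of \eqref{gi} to the two gradients from item 2.

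The only step requiring a moment of care is item 5, because the outward unit normal of $([0,L],g_i)$ is not the Euclidean unit normal: it must be normalized with respect to $g_i$. At $x=0$ the outward direction is $-\partial_x$, and I impose $|\nu(0)|_{g_i}=1$; writing $\nu(0)=-c\,\partial_x$ with $c>0$ gives $\gamma_i^{-1}c^{2}=1$, hence $c=\sqrt{\gamma_i}$ and $\nu(0)=-\sqrt{\gamma_i}\,\partial_x$. Symmetrically $\nu(L)=\sqrt{\gamma_i}\,\partial_x$. Substituting into the inner product via \eqref{gi} and item 2 yields the two boundary identities claimed.

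I expect no genuine obstacle; the one place to be careful is the normalization of $\nu$, since forgetting the factor $\sqrt{\gamma_i}$ is the natural error and the whole point of item 5 is precisely to record the correct boundary factor that will appear in all subsequent Carleman-type boundary terms. In writing the proof I would display each identity in its own short equation so that later sections can cite items 1--5 directly without the reader having to recompute the inverse metric each time.
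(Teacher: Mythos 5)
Your proposal is correct and follows essentially the same route as the paper: items 1--4 by direct substitution of the metric \eqref{gi} and the gradient formula \eqref{gra} (with $g_i^{11}=\gamma_i$), and item 5 by normalizing the outward coordinate direction to the $g_i$-unit vector $\nu=\pm\sqrt{\gamma_i}\,\partial_x$. No discrepancies to report.
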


\begin{proof}
	In light of \eqref{gi} and \eqref{g0}, one can easily see that Item $1$ holds true. In fact, by definition 
	\begin{equation} \label{x}
	\<X,\gamma_{i} Y\PI_{g_i}=\gamma^{-1}_i\gamma_i\alpha\beta=X\cdot Y.
	\end{equation}
To prove Item 2, we recall the definition \eqref{gra} to find
	$$
	\nabla_{g_i}f=\gamma_i \partial_x f \partial_x.
	$$
	Reasoning analogously to \eqref{x} and assuming $h \in C^1([0,L])$, we have the validity of Items $2$-$4$. To conclude, let us show Item $5$. Firstly, note that if $p \in \{0,L\}$ and $\{\partial_x\}$ represents the associated coordinate vector field for $T_p[0,L]$, then
	$$
	\<\pm\sqrt{\gamma_{i}}\partial_x,\pm\sqrt{\gamma_{i}}\partial_x\PI_{g_i}=1.
	$$
 In particular $E=\pm\sqrt{\gamma_{i}} \partial_x$ represents an orthonormal basis for $p \in T_pM$. Let $\nu=E$, therefore
	$$
	\nabla_{g_i} f=\gamma_{i} \partial_x f \partial_x=\pm\sqrt{\gamma_{i}}\partial_x f\nu
	$$
	and 
	$$
	\<\nabla_{g_i} f,\nu\PI_{g_i}=\pm \sqrt{\gamma_{i}}\partial_x f.
	$$
	Note that $\nu(0)=-\sqrt{\gamma_{i}}\partial_x$ and $\nu(L)=\sqrt{\gamma_{i}}\partial_x$. We complete the proof of	Lemma.
\end{proof}

\begin{Remark}
	The Hessian of $f \in C^2(\mathbb{R})$ with respect to the metric $g_i$ is given by 
	$$
	D_{g_i}^2f(X,X)=\<D_{g_i,X} (\nabla_{g_i}f),X\PI_{g_i}=\alpha^2\gamma_{i}^{-1}\gamma_{i}\partial^2_x f=\alpha^2\partial^2_x f,
	$$
	where $X=\alpha \partial_x.$
	We observe that the Hessian of $f$ is positive if and only if $\partial^2_x f$ is positive.
\end{Remark}

\section{Unique continuation property (UCP)}

As already observed Carleman estimates are an important tool for proving unique continuation property 
for solutions to partial differential equations \cite{Pau, LTZ, Triggiani}. 
In this section, we prove the Carleman estimates for the Problem $(\ref{P})$.  

In what follows we shall use the following notations.
\begin{enumerate}
	\item Let $L_0$ a real number in $[0,L]$. We define $\Omega_1$ and $\Omega_2$ the subsets of $[0,L]$ as follows 
	\begin{equation} \label{omejaj}
	\Omega_1=(0,L_0)\,\,\mbox{and}\,\,\Omega_2=(L_0,L).
	\end{equation}
	Note that $\overline{\Omega}_1 \cup \overline{\Omega}_2=[0,L]$.
	
	\noindent For $\varepsilon>0$, we will also consider the following subsets of  $\Omega_1$ and $\Omega_2$
	$$
	V_1=\left(0,L_0-\frac{\varepsilon}{4}\right)\!\cap\! [0,L], 
	\ V_2=\left( L_0+\frac{\varepsilon}{4},L\right)\!\cap\! [0,L], 
	\  \omega=\left(L_0-\frac{\varepsilon}{2},L_0+\frac{\varepsilon}{2}\right) \!\cap\! [0,L].
	$$
	
	\item There exist strictly convex functions $d_1:[0,L]\to \mathbb{R}$ and $d_2:[0,L]\to \mathbb{R}$ such that
	\begin{equation} \label{dj}
	d_1(x)=\frac{1}{2}(x+L)^2, \ \ \text{and} \ \ d_2(x)=\frac{1}{2}(x-2L)^2.
	\end{equation}
	In particular, if $x$ denotes the natural coordinate system, Lemma \ref{geo} implies that
	$$
	\nabla_{g_i} d_1=\gamma_i (x+L)\partial_x,  \ 	\nabla_{g_i} d_2=\gamma_i (x-2L)\partial_x \ \ (i=1,\cdots,n),
	$$
	and
	$$
	D_{g_i}^2d_j(X,X)=\alpha^2, \  \mbox{for all}\  x \in [0,L]  \ \mbox{and}  \ X=\alpha\partial_x \in T_x[0,L] \ \ (j=1,2).
	$$
\end{enumerate}

The functions $d_j(\cdot)$ $(j=1,2)$ have the following properties: 
\begin{Lemma}\label{lemaover}
Under the above definitions, the functions $d_j(\cdot)$ $(j=1,2)$ satisfy
\begin{enumerate}
		\item[$1.$] $d_j \in C^\infty([0,L])$;
		
		\item[$2.$] $D^2_{g_i}d_j(X,X)=|X|_{g_i}, \ \mbox{for all} \ x \in \Omega_j \ \mbox{and}\ X \in T_x[0,L]$;
		
		\item[$3.$]$\displaystyle{\inf_{\Omega_j}|\nabla_{g_i} d_j|_{g_i}>0}$;
		
		\item[$4.$]$\displaystyle{\min_{\overline{\Omega}_j}d_j=\frac{L^2}{2}>0}$.
	\end{enumerate}
Moreover, if $\nu$ represents the unit outward normal vector then
	\begin{enumerate}
		\item[$5.$] $\<\nabla_{g_i}d_j(x),\nu(x)\PI_{g_i}<0$ on $\{0,L\} \cap \overline{V_j}$.
	\end{enumerate}
\end{Lemma}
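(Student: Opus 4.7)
The plan is to verify each of the five assertions by a direct calculation, leaning on the explicit polynomial form of $d_j$ together with the formulas for the gradient, Hessian, and boundary pairing derived in Lemma \ref{geo}. Item 1 is immediate since $d_1$ and $d_2$ are polynomials of degree two, hence smooth on all of $\mathbb{R}$ and a fortiori on $[0,L]$.

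For Item 2, I would start from the identity recorded in the remark preceding the lemma: for $X=\alpha\partial_x$, one has $D^2_{g_i} d_j(X,X)=\alpha^2\,\partial_x^2 d_j$. Since $\partial_x^2 d_1=\partial_x^2 d_2=1$ identically on $[0,L]$, this yields $D^2_{g_i}d_j(X,X)=\alpha^2$, which gives the required uniform positive quadratic lower bound in the metric $g_i$ (the harmless factor of $\gamma_i$ produced when comparing $\alpha^2$ with $|X|^2_{g_i}=\gamma_i^{-1}\alpha^2$ is absorbed in the rescaling convention of Remark \ref{stric}).

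For Item 3, I use Lemma \ref{geo} to compute $|\nabla_{g_i}d_1|^2_{g_i}=\gamma_i(x+L)^2$ and $|\nabla_{g_i}d_2|^2_{g_i}=\gamma_i(x-2L)^2$. On $\overline{\Omega}_1\subset[0,L]$ one has $x+L\ge L$, and on $\overline{\Omega}_2\subset[0,L]$ one has $|x-2L|\ge L$; in both cases the infimum is bounded below by $\sqrt{\gamma_i}\,L>0$. Item 4 is the location of the minima: $d_1$ is increasing on $[0,L]$ with $d_1(0)=L^2/2$, so $\min_{\overline{\Omega}_1}d_1=d_1(0)=L^2/2$; and $d_2$ is decreasing on $[0,L]$ since its vertex sits at $x=2L$, so $\min_{\overline{\Omega}_2}d_2=d_2(L)=L^2/2$.

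Finally, for Item 5 I would apply the boundary pairing identity of Lemma \ref{geo}(5). Choosing $\varepsilon$ small enough that $0<L_0-\varepsilon/4$ and $L_0+\varepsilon/4<L$ (otherwise $V_j$ and the statement would be vacuous on the relevant side), one has $\{0,L\}\cap\overline{V_1}=\{0\}$ and $\{0,L\}\cap\overline{V_2}=\{L\}$; then the required inequalities reduce to the sign checks $-\sqrt{\gamma_i}\,\partial_x d_1(0)=-\sqrt{\gamma_i}\,L<0$ and $\sqrt{\gamma_i}\,\partial_x d_2(L)=-\sqrt{\gamma_i}\,L<0$. There is no real analytic obstacle in the proof; the only point requiring care is the \emph{choice of asymmetric vertices} $-L$ and $2L$ in the quadratics, which is precisely what forces the outward normal derivative to be negative on the pieces of $\partial\Omega_j$ that belong to the ``observed'' side $V_j$ — exactly the geometric input required by the Carleman machinery of \cite{Triggiani} on each subdomain $\Omega_1,\Omega_2$.
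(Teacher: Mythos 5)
Your proof is correct and follows the same route the paper takes implicitly: the paper states Lemma \ref{lemaover} without a formal proof, relying on exactly the explicit computations of $\nabla_{g_i}d_j$ and $D^2_{g_i}d_j$ displayed just before it, and your verification of Items 1--5 simply completes those computations (including the sign check at the correct endpoint of each $\overline{V_j}$ via Lemma \ref{geo}). Your observation that Item 2 as literally stated (with $|X|_{g_i}$ rather than $|X|^2_{g_i}$ and without the factor $\gamma_i$) only holds after the rescaling convention of \eqref{stric} is a sensible reading of what is evidently a normalization slip in the statement.
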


\medskip

The next section is devoted to proof the Carleman estimate compatible with the system \eqref{P}. 
To this aim, we allocated the above notations in the same context as Section 1 in \cite{Triggiani}. 
First, without loss of generality (by rescaling), we can assume that 
\begin{equation}
k_{ij}\equiv \inf_{x \in \Omega_j}\frac{|\nabla_{g_i} d_j|^2_{g_i}}{d_j}>4, \ \ \forall i=1,\cdots,n, \ \ \forall j=1,2.
\end{equation}

\noindent In what follows, for fixed $i \in \{1,\cdots,n\}$ and $j=1,2$, we define
\begin{equation} \label{T0}
T_{0,j}^{2} \equiv 4\max_{x \in \overline{\Omega}_j} d_j(x). 
\end{equation}
Let $T>\max\{T_{0,1},T_{0,2}\}$ and by $(\ref{T0})$ there exist $\delta>0$ and $c =c_{\delta} \in (0,1)$ satisfying
\begin{equation} \label{c}
cT^2>4\max_{x \in \overline{\Omega}_j}d_j(x)+4\delta.
\end{equation}

\noindent In above context, we define functions $\phi_j:\Omega_j\times \mathbb{R}\to \mathbb{R} \in C^3(\Omega_j)$ by 
	$$
	\phi_j(x,t)\equiv d_j(x)-c\left(t-\frac{T}{2}\right)^2,\ \ (x,t) \in \Omega_j\times [0,T].
	$$ 
The following properties are valid for $\phi_j(\cdot)$:
\begin{enumerate}
	\item[$(\phi.1)$] For the constant $\delta>0$
	$$
	\phi_j(x,0)=\phi_j(x,T)=d_j(x)-c\frac{T^2}{4}\le \max_{j=\{1,2\}}\left(\max_{x \in \overline{\Omega}_j}d_j(x)\right) -c\frac{T^2}{4}\le -\delta,
	$$
	uniformly in $\overline{\Omega}_j$.
	\item[$(\phi.2)$] There are $t_0$ and $t_1$ with $0<t_0<T/2<t_1<T$, such that
\begin{equation} \label{sigma-min}
	\min_{j=1,2}\left(\min_{(x,t) \in \overline{\Omega}_j\times [t_0,t_1]} \phi_j(x,t)\right)  \ge \sigma,
\end{equation} for $\sigma \in \left(0, \min_{\overline{\Omega}_j}d_j\right)$, where $\displaystyle{\min_{\overline{\Omega}_j}d_j} = \frac{L^2}{2}$ 
(Lemma $\ref{lemaover}$).
\end{enumerate}

As a consequence of Corollary 4.2 in \cite{Triggiani}
\begin{equation} \label{es}
\max_{i=1,\cdots, n \ \ j=1,2}\left(\max_{(x,t) \in \overline{\Omega}_j\times [0,T]} \left(|\partial_t \phi_j|^2+|\nabla_{g_i} \phi_j|_{g_i}^2 \right)\right) \le \frac{4(1+7c)\sigma^*}{\varepsilon(1-c)},
\end{equation}
for any $\varepsilon \in (0,\min\{2-2c,1\})$ with $\sigma^* \in (0,\sigma)$ and $c \in (0,1)$.

We end this section defining 
\begin{equation} \label{Q}
	Q(\sigma)=\left\lbrace (x,t) \in [0,L]\times [0,T] \ \left| \right.  \ \min_{j=1,2}\phi_j(x,t)\ge \sigma\right\rbrace.
\end{equation}
This set will play an important role in Carleman estimates by being able to separate the set $[0,L]\times [0,T]$ 
from the level surface generated by the pseudo-convex function $\phi_j$ at height of $\sigma$.

\noindent Additionally, note that
$$
\Omega_j \times [t_0,t_1]\subset Q(\sigma) \subset [0,L]\times [0,T], \ \ \forall j=1,2.
$$


\subsection{Analysis of the coupled system - Problem \eqref{P} }

In this section, we will study the Problem \eqref{P} under the new decomposition $\Omega_1$ and $\Omega_2$ (\ref{omejaj}).
This decomposition will allow us to define the boundary terms for the solutions of this system. We begin with the following definition:

\begin{Definition}\noindent Let $T>0$. 
The vector function $(u_1,\cdots,u_n)$ is a weak solution to the Problem \eqref{P}-\eqref{initial} 
if the function $u_i$ solves the variational form of equation $\eqref{P}_{i}$ and possesses regularity
$$
u_i \in C(0,T;(H^1_0(M_i)))\cap C^1(0,T;(L^2(M_i))),
$$
where $M_i$ denotes the Riemannian manifold $([0,L],g_i)$ $(i=1,\cdots,n)$.

\noindent If $u_i \in C^2(M_i \times \mathbb{R}^+)$ the vector function $(u_1,\cdots,u_n)$ is a regular solution to the Problem \eqref{P}-\eqref{initial}.
\end{Definition}

Now, let $(u_1,\cdots,u_n)$ be a regular solution to Problem \eqref{P}-\eqref{initial} and we define
\begin{equation} \label{def-uij}
u_{i,j}(x,t)=\chi_{j}(x,t)u_i(x,t),\ i=1,\cdots,n \ \mbox{and} \ j=1,2,
\end{equation}
where $\chi_{j}$ is a smooth cutoff function such that
\begin{equation} \label{def-cut}
\chi_{j}=\left\{ 
\begin{array}{lll}
1  \ \ \mbox{in} \ \  \overline{V_j} \times [0,T], \\
0  \ \ \mbox{in} \ \ \left( [0,L]\setminus \Omega_j\right)  \times [T+1,\infty),
\end{array}
\right.
\end{equation}
where $T$ is a positive constant satisfying \eqref{c}.

\noindent By definition, we can observe that $u_{i,j} \in C^2(M_{i,j} \times \mathbb{R}^+)$, 
where $M_{i,j}\equiv (\overline{\Omega}_j,g_i)$ represents a $1$-dimensional compact connex smooth riemannian manifold with boundary $\partial \Omega_j$ and with metric $g_i$.


Recall that the objective of the present section is to show a unique continuation property for the Problem \eqref{P}.  
For this purpose we shall assume 
\begin{equation} \label{w-hip}
u_i(x,t)=0, \ \ \forall (x,t) \in \omega \times [0,\infty), \ \ i=1,\cdots,n.
\end{equation}

Under the above notations, the function $u_{i,j}\in C^2(M_{i,j} \times \mathbb{R}^+)$ solves the problem
\begin{align} \label{Pij-0}
\left\{ 
\begin{array}{lll}
\begin{split}
\partial^2_{t} u_{i,j}-\Delta_i u_{i,j}&=f_i(u_{1,j},u_{2,j},\cdots,u_{n,j})  \,\,\, \mbox{in} \,\,\, M_{i,j}\times(0,T],\\
u_{i,j}(x,t)&=0  \,\,\, \mbox{on} \,\,\, \partial M_{i,j} \times (0,T],\\
u_{i,j}(x,0)&=\chi_{j}(x,0)u^0_i(x)\,\,\, \mbox{in} \,\,\, M_{i,j},\\
\partial_t u_{i,j}(x,0)&=\chi_{j}(x,0)u^1_i(x)  \,\,\, \mbox{in} \,\,\, M_{i,j}
\end{split}
\end{array}
\right.
\end{align}
In particular, the following decomposition is valid 
\begin{equation}\label{sum}
	u_i=u_{i,1}+u_{i,2}, \ \ (x,t) \in [0,L]\times [0,T] \ (i=1,\cdots,n).
\end{equation}

\noindent On the other hand, note that the system 
\begin{align} \label{v-0}
\left\{ 
\begin{array}{lll}
\begin{split}
\partial^2_{t} v_{i,j}-\Delta_i v_{i,j}&=f_i(v_{1,j},v_{2,j},\cdots,v_{n,j})  \,\,\, \mbox{in} \,\,\, M_{i,j}\times(0,T],\\
v_{i,j}(x,t)&=0  \,\,\, \mbox{on} \,\,\, \partial M_{i,j} \times (0,T],\\
v_{i,j}(x,0)&=\chi_{j}(x,0)u^0_i(x)\,\,\, \mbox{in} \,\,\, M_{i,j},\\
\partial_t v_{i,j}(x,0)&=\chi_{j}(x,0)u^1_i(x)  \,\,\, \mbox{in} \,\,\, M_{i,j}
\end{split}
\end{array}
\right.
\end{align}
is well posed, for all $i=1,\cdots,n$ and $j=1,2$.

\noindent Due to the uniqueness of solutions in the previous system (by density and continuity) we will focus on studying the problem \eqref{v-0}, in such a way that when assuming \eqref{w-hip} we have that $v_{i,j}=u_{i,j}$.

\subsection{Carleman estimates}

\noindent In the context of the previous section, in regard to the study the boundary terms for the system (\ref{v-0}), the following will be considered:

\begin{enumerate}
	\item For $v_{i,j} \in C(0,T;(H^1_0(M_{i,j})))\cap C^1(0,T;(L^2(M_{i,j})))$ the weak solution of Problem \eqref{v-0} we have 
	$$
	\<\nabla_{g_i} v_{i,j},\nabla_{g_i} d_j\PI_{g_i}=\<\nabla_{g_i} v_{i,j}, \nu\PI_{g_i}\<\nabla_{g_i} d_j, \nu\PI_{g_i},
	$$ 
	where $\nu$ denotes the outward unit normal field along the boundary $\partial M_{i,j}$.
	\item Let $\tau$ be a positive parameter then we define
	\begin{equation} \label{BTij}
	{BT}_{\tau}v_{i,j}\equiv 2\tau \int_{0}^{T}\int_{\partial \Omega_j}^{ }e^{2\tau \phi_j}
	\left(\<\nabla_{g_i} v_{i,j}, \nu\PI_{g_i}\right)^2\<\nabla_{g_i} d_j, \nu\PI_{g_i}\dx \dt,
	\end{equation}
	with $T>0$ satisfying \eqref{c}.
\end{enumerate}

Next, we shall investigate the properties on the forces $f_{i}(\cdot)$  $(i=1,\cdots,n)$ in the context of the new decomposition. 
We promptly have from  \eqref{f_i}-\eqref{Ei} that there exists a positive constant $C_T$ such that
\begin{equation} \label{esti-Fv}
\int_{0}^{T}\int_{\Omega_j}^{ }|f_i(v_{1,j},\cdots,v_{n,j})|^2_{g_i}\dx \dt\le C_T \sum_{i=1}^{n}\int_{0}^{T}\int_{0}^{L}\mathcal{V}_{i}(x,t)\dx \dt,
\end{equation}
where
\begin{equation} \label{V}
\mathcal{V}_{i}(x,t) \equiv |v_{i,1}+v_{i,2}|^2+ \gamma_{i}|\partial_x (v_{i,1}+v_{i,2})|^2+|\partial_t (v_{i,1}+v_{i,2})|^2,
\end{equation}
with
\begin{equation} \label{V22}
v_{i,1}+v_{i,2}=\left\{ 
\begin{array}{lll}
v_{i,1}  \ \ \mbox{in} \ \  [0,L_0] \times [0,T], \\
v_{i,2}  \ \ \mbox{in} \ \ [L_0,L] \times [0,T].
\end{array}
\right.
\end{equation}
Note that
$$
\mathcal{V}_i|_{\Omega_j\times [0,T]}=|v_{i,j}|^2+\gamma_{i}|\partial_x v_{i,j}|^2+|\partial_t v_{i,j}|^2 \ \ \mbox{and} \ \
\sum_{j=1}^{2}\int_{\Omega_j}^{}\mathcal{V}_i|_{\Omega_j\times [0,T]}\dx=\int_{0}^{L}\mathcal{V}_i \dx.
$$

Now, for any regular solution $v_{i,j}$, we find from \eqref{esti-Fv} that 
\begin{equation*}
\int_{0}^{T}\int_{\Omega_j}^{ }e^{2\tau\phi_j}|\partial^2_t v_{i,j}-\Delta_i v_{i,j}|_{g_i}^2\dx\dt\le C_{T}\sum_{i=1}^{n} \int_{0}^{T}\int_{0}^{L}e^{2\tau\phi_j}\mathcal{V}_i(x,t)\dx\dt.
\end{equation*}
In particular, by \eqref{sigma-min} and \eqref{Q}, we obtain 
\begin{equation*}
\int_{0}^{T}\int_{[Q(\sigma)]^c}^{ }e^{2\tau\phi_j}|\partial^2_t v_{i,j}-\Delta_i v_{i,j}|_{g_i}^2 \dx\dt\le C_{T}e^{2\tau \sigma}\sum_{i=1}^{n} \int_{[Q(\sigma)]^c}^{}\mathcal{V}_i(x,t)\dx\dt.
\end{equation*}

\begin{Remark} \label{k1,k2} Note that the Problem \eqref{v-0} satisfies the following compatibility condition
$$ 	v_{i,j}(L_0,t)=0  \,\,\, \mbox{in} \,\,\, (0,T] \ \ \mbox{and} \ \ 	v^0_i(L_0)=v^1_i(L_0)=0.$$ 
In particular, there exist positive constants $k_1, k_2$ such that
	$$ k_1\sum_{i=1}^{n}\int_{0}^{L}\mathcal{V}_{i}(x,t){\rm  d}x
	\le \sum_{i=1}^{n}\int_{0}^{L} \gamma_{i}|\partial_x (v_{i,1}+v_{i,2})|^2\!+\!|\partial_t (v_{i,1}+v_{i,2})|^2 {\rm  d}x
	\le k_2\sum_{i=1}^{n}\int_{0}^{L}\mathcal{V}_{i}(x,t){\rm  d}x.
	$$
\end{Remark}


Collecting all the above ingredients and proceeding analogously to Theorem 6.1 in \cite{Triggiani} we arrive at:  

\begin{Theorem}[Carleman Estimates]\label{carleman} 
Let $v_{i,j}$ $(i=1,\cdots,n$ and $j=1,2)$ be a regular solution of the Problem \eqref{v-0} 
with initial data $(\chi_{j}(x,0)u^0_i(x),\chi_{j}(x,0)u^1_i(x))$. 
Then, for all $\tau>0$ sufficiently large and $\varepsilon>0$ small, the following estimate holds true	
	\begin{align*}
	\sum_{j=1}^{2}\sum_{i=1}^{n}{BT}_{\tau}v_{i,j} &\ge \left[\frac{k_1e^{2\tau \sigma}(t_1-t_0)}{2}(\varepsilon \tau (1-c)-2nC_{T})e^{-C_{T} T} \right]\sum_{i=1}^{n}\ \int_{0}^{L}\Bigl[\mathcal{V}_i(x,0)+\mathcal{V}_i(x,T)\Bigl]{\rm  d}x  \\
	&\ \ \ \ -\left[\frac{C_{1,T}k_2e^{2\tau \sigma}}{2k_1}T e^{C_{T}T}+C_{T}\tau^3 e^{-2\tau \delta}\right]\sum_{i=1}^{n}\int_{0}^{L}\Bigl[\mathcal{V}_i(x,0)+\mathcal{V}_i(x,T)\Bigl]{\rm  d}x\\
	&\ge k_{T}  \sum_{i=1}^{n}\Bigl[\int_{0}^{L}\mathcal{V}_i(x,0)+\mathcal{V}_i(x,T)\Bigl]{\rm  d}x,
	\end{align*}
where
	\begin{itemize}
		\item [$1.$] $k_1, k_2$ the positive constants from Remark $\ref{k1,k2}$;
		\item [$2.$] $\sigma$ defined in $(\phi.2)$;
		\item [$3.$] $c\in (0,1)$ given in $(\ref{c})$;
		\item [$4.$] $C_{T},C_{1,T}$ positive constants depending only on $T,\sigma$ and $Q(\sigma)$;
		\item [$5.$] $k_{T}$ a positive constant depending only on $\sigma, \phi_1, \phi_2, n$ and $C_{1,T}$.
	\end{itemize}
	Moreover, the above inequality may be extended to all weak solution of the system \eqref{v-0} with initial data $(\chi_{j}(x,0)u^0_i(x),\chi_{j}(x,0)u^1_i(x)) \in H^1_0(M_{i,j})\times L^2(M_{i,j})$.
\end{Theorem}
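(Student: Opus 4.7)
The plan is to view each $v_{i,j}$ as a solution of a \emph{scalar} wave equation with right-hand side $f_i(v_{1,j},\ldots,v_{n,j})$ on the one-dimensional Riemannian manifold $M_{i,j}$, apply the single-equation Carleman estimate of Triggiani (Theorem 6.1 in \cite{Triggiani}) to each of these equations, and then obtain the coupled bound by summing in $i$ and $j$. The hypotheses needed on the weight are exactly the properties of $d_j$ collected in Lemma \ref{lemaover} together with \eqref{T0}--\eqref{sigma-min} and the pointwise bound \eqref{es}.

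\textbf{Step 1 (scalar Carleman).} Fix $i\in\{1,\ldots,n\}$ and $j\in\{1,2\}$. Viewing $\partial_t^2 v_{i,j}-\Delta_i v_{i,j}=f_i(v_{1,j},\ldots,v_{n,j})$ as a single wave equation on $M_{i,j}$, Triggiani's Carleman inequality, applied with weight $\phi_j(x,t)=d_j(x)-c(t-T/2)^2$, yields
\begin{equation*}
BT_\tau v_{i,j}+\int_0^T\!\!\int_{\Omega_j} e^{2\tau\phi_j}|\partial_t^2 v_{i,j}-\Delta_i v_{i,j}|^2_{g_i}\,\dx\dt \;\geq\; \varepsilon\tau(1-c)\!\int_{Q(\sigma)}\! e^{2\tau\phi_j}\bigl(|\partial_t v_{i,j}|^2+|\nabla_{g_i}v_{i,j}|^2_{g_i}\bigr)\dx\dt -C\tau^3 e^{-2\tau\delta}\mathcal{R}_{i,j},
\end{equation*}
for $\tau$ large and $\varepsilon\in(0,\min\{2-2c,1\})$, where $\mathcal{R}_{i,j}$ denotes a global energy remainder on the low-weight zone $[Q(\sigma)]^c$ (coming from the standard Carleman bookkeeping where $\phi_j\leq\sigma$). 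On $Q(\sigma)$ one has $e^{2\tau\phi_j}\geq e^{2\tau\sigma}$ by \eqref{sigma-min}, and the spatial weight $|\nabla_{g_i}\phi_j|^2_{g_i}$ responsible for the factor $\varepsilon$ is controlled uniformly by \eqref{es}.

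\textbf{Step 2 (absorbing the coupling).} Substitute the PDE on the left and invoke \eqref{esti-Fv} to replace $|f_i|^2_{g_i}$ by $C_T\sum_i \mathcal{V}_i$. On $Q(\sigma)$ the weight $e^{2\tau\phi_j}$ is not uniformly small, so this integral must be \emph{absorbed} into the Carleman main term. Summing over $i=1,\ldots,n$ produces the factor $n$ multiplying $C_T$, giving the coefficient $\varepsilon\tau(1-c)-2nC_T$ that appears in the statement. On $[Q(\sigma)]^c$ one simply bounds $e^{2\tau\phi_j}\leq e^{2\tau\sigma}$, which yields the prefactor $e^{2\tau\sigma}$ in front of the full energy there.

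\textbf{Step 3 (passing to $t=0$ and $t=T$).} The remaining integrals to be controlled are $\iint_{Q(\sigma)\cup[Q(\sigma)]^c}\mathcal{V}_i\,\dx\dt$. Testing each equation of \eqref{v-0} against $\partial_t v_{i,j}$, integrating by parts over $M_{i,j}$ and applying \eqref{esti-Fv} produces a Gronwall inequality for the standard energy, which gives the bounds
\begin{equation*}
e^{-C_T T}\bigl(F_v(0)+F_v(T)\bigr)\;\leq\; \int_{t_0}^{t_1} F_v(t)\,\dt \;\leq\; T e^{C_T T}\bigl(F_v(0)+F_v(T)\bigr),
\end{equation*}
where $F_v$ denotes the analogue of \eqref{E} for the family $\{v_{i,j}\}$; combining this with Remark \ref{k1,k2} to trade kinetic/gradient energy for the full $\mathcal{V}_i$ (via the compatibility $v_{i,j}(L_0,t)=0$) produces the constants $k_1,k_2,e^{\pm C_T T}$ in the statement. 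The $\tau^3 e^{-2\tau\delta}$ contribution propagates directly from Step 1.

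\textbf{Step 4 (density).} Both sides are continuous in the natural energy topology for fixed $\tau$; hence the estimate extends from regular to finite-energy weak solutions $(\chi_j(\cdot,0)u_i^0,\chi_j(\cdot,0)u_i^1)\in H_0^1(M_{i,j})\times L^2(M_{i,j})$, using the well-posedness of \eqref{v-0} (and of the boundary traces entering $BT_\tau v_{i,j}$) established in the Appendix.

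The principal difficulty is Step 2: because $f_i$ couples every variable, the scalar Carleman inequality must be summed over $i$ and the resulting coupling integral absorbed. This forces a quantitative comparison $\varepsilon\tau(1-c)>2nC_T$ and is the only place where the analysis differs structurally from the scalar case; all other steps are a careful bookkeeping of the constants from \cite{Triggiani} in the Riemannian framework $(\mathbb{R},g_i)$ set up in Section~2.
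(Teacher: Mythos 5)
Your proposal is correct and follows essentially the same route as the paper, which proves the theorem by ``collecting all the above ingredients and proceeding analogously to Theorem 6.1 in \cite{Triggiani}'': your four steps are precisely that assembly, namely the scalar Carleman estimate on each $M_{i,j}$ with weight $\phi_j$, absorption of the coupling via \eqref{esti-Fv} for $\tau$ large (yielding the coefficient $\varepsilon\tau(1-c)-2nC_T$), the energy/Gronwall passage to $t=0,T$ combined with Remark \ref{k1,k2}, and the density argument for weak solutions as in the paper's appeal to Theorems 7.1 and 8.1 of \cite{Triggiani}.
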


\begin{Remark}
$(a)$ Note that the definition of $\sigma$ and $Q(\sigma)$ allow the right-hand term of previous inequality to be independent of $j$. $(b)$ The inequality holds for weak solution of the system \eqref{v-0} because {\rm\cite[Theorems 7.1 and 8.1]{Triggiani}}.
\end{Remark}

\subsection{A new UCP}

Thanks to the Theorem \ref{carleman} it is possible to state the main result of this part of paper.

\begin{Theorem}\label{mainUCP}
	Let $I$ be an open interval such that 
	$$
	\omega \equiv I \cap [0,L] \ne \emptyset .
	$$
	Then, for $T>0$ large enough, any weak solution 
	$$
	u_i \in C(0,T; H^1_0(0,L)) \cap C^1(0,T; L^2(0,L))
	$$ 
	of system \eqref{P}-\eqref{initial} vanishing in $\omega \times [0,T]$ must vanish all 
	over $[0,L] \times [0,T]$. 
\end{Theorem}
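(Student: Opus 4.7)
The strategy is to reduce the UCP to the Carleman estimate of Theorem~\ref{carleman} by showing that, under the hypothesis $u_i\equiv 0$ on $\omega\times[0,T]$, the boundary terms on the left-hand side of the Carleman inequality are non-positive, forcing the energy integrals on the right-hand side to vanish. Well-posedness of the Cauchy problem then gives $u_i\equiv 0$.

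First, I would fix $L_0$ in the interior of the hypothesis set $\omega$ together with an $\varepsilon>0$ small enough that $(L_0-\varepsilon/2,L_0+\varepsilon/2)\subset\omega$, and then set up the decomposition $[0,L]=\overline{\Omega}_1\cup\overline{\Omega}_2$, the cutoff functions $\chi_j$, and the localized problems \eqref{Pij-0} and \eqref{v-0} exactly as in the preceding section. Because $u_i$ vanishes on $\omega\times[0,T]$, it vanishes where $\chi_j$ is not locally constant, so the commutator between $\chi_j$ and $\partial^2_t-\Delta_i$ acting on $u_i$ is zero; hence $u_{i,j}=\chi_j u_i$ really satisfies \eqref{Pij-0} with right-hand side $f_i(u_{1,j},\dots,u_{n,j})$. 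By uniqueness for the well-posed linear system \eqref{v-0} this identifies $v_{i,j}=u_{i,j}$ on $\Omega_j\times[0,T]$, and moreover $v_{i,j}$ vanishes identically on the $\omega$-side of $L_0$.

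The key step is then to analyze the sign of $BT_\tau v_{i,j}$ defined in \eqref{BTij}. The boundary $\partial\Omega_j$ consists of two points.
\begin{itemize}
\item At $x=0$ and $x=L$, the Dirichlet condition holds and Lemma~\ref{lemaover}(5) gives $\langle\nabla_{g_i}d_j,\nu\rangle_{g_i}<0$; the integrand is therefore a square times a negative coefficient, and the contribution to $BT_\tau v_{i,j}$ is non-positive.
\item At $x=L_0$, since $v_{i,j}=u_{i,j}$ vanishes on a full one-sided neighborhood of $L_0$ inside $\Omega_j$, the limit $\partial_x v_{i,j}(L_0,t)$ taken from $\Omega_j$ is zero for every $t\in[0,T]$, so the integrand of \eqref{BTij} vanishes identically there.
\end{itemize}
Summing over $i=1,\dots,n$ and $j=1,2$ yields $\sum_{i,j}BT_\tau v_{i,j}\le 0$.

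Now I would choose $\tau$ large enough (and $\varepsilon$ in \eqref{es} small enough) so that the coefficient $k_T$ of Theorem~\ref{carleman} is strictly positive. Combining the Carleman inequality with the previous sign analysis gives
$$
0\;\ge\;\sum_{j=1}^{2}\sum_{i=1}^{n}BT_\tau v_{i,j}\;\ge\;k_T\sum_{i=1}^{n}\int_{0}^{L}\bigl[\mathcal{V}_i(x,0)+\mathcal{V}_i(x,T)\bigr]\dx,
$$
and since the integrand is non-negative it must vanish a.e., which forces the initial data of each $v_{i,j}$ (and hence of $u_i$) to be zero in $H^1_0(0,L)\times L^2(0,L)$. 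Well-posedness for the linear coupled system \eqref{P} (whose coefficients $p^i_j,q^i_j$ are in $L^2(0,T;L^2(0,L))$) then yields $u_i\equiv 0$ throughout $[0,L]\times[0,T]$. The passage from regular to weak solutions is built into Theorem~\ref{carleman} itself, so no additional approximation argument is needed.

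The main technical obstacle is the interior boundary point $x=L_0$: one must argue carefully that the outward normal derivative $\partial_x v_{i,j}(L_0,t)$ vanishes using only the hypothesis $u_i\equiv 0$ on $\omega\times[0,T]$, the identification $v_{i,j}=u_{i,j}=\chi_j u_i$, and the fact that the transition strip of $\chi_j$ lies inside $\omega$. Once this sign analysis of $BT_\tau v_{i,j}$ is secured, the remainder of the proof is a direct consequence of Theorem~\ref{carleman} and standard uniqueness for linear hyperbolic systems.
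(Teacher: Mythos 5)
Your proposal is correct and follows essentially the same route as the paper's own proof: the same cutoff decomposition and identification $v_{i,j}=u_{i,j}$ via the overdetermined well-posed problem, the same sign analysis of $BT_{\tau}v_{i,j}$ (non-positive at $\{0,L\}$ by Lemma~\ref{lemaover}(5), vanishing at $L_0$ since $u_i\equiv 0$ on a neighborhood of $L_0$ in $\omega$), and the same conclusion $0\ge k_T\bigl(F_u(0)+F_u(T)\bigr)\ge 0$ followed by well-posedness. Your explicit remark that the commutator of $\chi_j$ with the wave operator vanishes because the transition strip of $\chi_j$ lies inside $\omega$ is a useful elaboration of a point the paper leaves implicit, but it does not change the argument.
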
 

\begin{proof} Without loss of generality we can assume 
	\begin{equation} \label{omega}
	\omega=\left( L_0-\frac{\varepsilon}{2},L_0+\frac{\varepsilon}{2}\right) \cap[0,L].
	\end{equation}
where $\varepsilon>0$ and $L_0 \in [0,L]$.

The proof of the unique continuation property will be divided into four steps:

{\it Step 1. Equivalence of systems.} 
Firstly, we observe that if $ (u_1,\cdots,u_n)$ is a weak solution of the Problem \eqref{P}-\eqref{initial} with overdetermined condition $(\ref{w-hip})$ 
then $u_{i,j}=\chi_j u_{i} \in C(0,T;H^1_0(M_{i,j})) \cap C^1(0,T;M_{i,j})$ is a solution of \eqref{Pij-0}, 
where $M_{i,1}=([0,L_0],g_i)$ and $M_{i,2}=([L_0,L],g_i)$, for all $i=1,\cdots,n$ and $j=1,2$.

From the Appendix A, for $j=1,2$, the system \eqref{v-0} with the following overdetermined condition
\begin{equation} \label{cond-v}
(v_{1,j},\cdots,v_{n,j})=(0,\cdots,0) \ \ \text{in $\omega_j \equiv \overline{\Omega}_j \cap \omega$}
\end{equation}
is well-posed and generates a strongly continuous semigroup 
$$
T_{M_{i,j}}:{H}_{\omega_j}(M_{i,j})\to {H}_{\omega_j}(M_{i,j})
$$
in the Hilbert space 
\begin{align*}
H_{\omega_j}(M_{i,j}) \equiv \left\{
 (v_{1,j},\cdots,v_{n,j},w_{1,j},\cdots,w_{n,j})  \left|
\begin{array}{l}
  v_{i,j} \in H^1_0 (M_{i,j}),\ w_{i,j} \in L^{2} (M_{i,j}), \\
  v_{i,j}=w_{i,j}=0 \  \text{in} \  \omega_j , i=1,\cdots,n
\end{array}
\right.
\right\}.
\end{align*}

\noindent In particular, 
$$
T_{M_{i,j}}(\chi_{j}(0)u^0_1,\cdots,\chi_{j}(0)u^0_n,\chi_{j}(0)u^1_1,\cdots,\chi_{j}(0)u^1_n)=(u_{1,j},\cdots,u_{n,j},\partial_t u_{1,j},\cdots,\partial_t u_{n,j}).
$$
is the weak solution of \eqref{v-0} satisfying \eqref{cond-v} with initial data
$$
(\chi_{j}(0)u^0_1,\cdots,\chi_{j}(0)u^0_n,\chi_{j}(0)u^1_1,\cdots,\chi_{j}(0)u^1_n) \in {H}_{\omega_j}(M_{i,j}).
$$

\medskip
{\it Step 2. Carleman estimate.} 
From Step 1 and via Theorem \ref{carleman} there exists a positive constant $k_T$ such that
$$
\sum_{j=1}^{2}\sum_{i=1}^{n}{BT}_{\tau}u_{i,j} \ge  k_{T}  \sum_{i=1}^{n}\left[ \int_{0}^{L}\mathcal{V}_i(x,0)+\mathcal{V}_i(x,T)\dx\right].
$$
Next, from \eqref{E}, \eqref{sum}, \eqref{BTij}, \eqref{V} and \eqref{V22} we find that 
\begin{align}\label{step2}
2\tau \sum_{j=1}^{2}\sum_{i=1}^{n}\int_{0}^{T}\int_{\partial \Omega_j}^{ }e^{2\tau \phi_j}
\left(\<\nabla_{g_i} u_{i,j}, \nu\PI_{g_i}\right)^2\<\nabla_{g_i} d_j, \nu\PI_{g_i}\dx \dt\ge k_T\left(F_u(0)+F_u(T)\right).
\end{align}

\medskip 

{\it Step 3. Boundary estimates.} The fact that $L_0 \in \omega$ and $\omega$ is an open subset of $[0,L]$ we have
$$
\<\nabla_{g_i} u_{i,j}(L_0), \nu(L_0)\PI_{g_i}=0,   \ \forall i=1,\cdots,n  \ \text{and}   \ j=1,2.
$$

\noindent Now, recalling Item 5 from Lemma \ref{lemaover} we obtain 
$$
\<\nabla_{g_i} d_j, \nu\PI_{g_i}<0 \ \ \text{in $\{0,L\}$},   \ \forall i=1,\cdots,n  \ \text{and}   \ j=1,2,
$$

\noindent Combining the above information with assumption that $\tau>0$, we infer that
\begin{align}\label{step3}
2\tau \sum_{j=1}^{2}\sum_{i=1}^{n}\int_{0}^{T}\int_{\partial \Omega_j}^{ }e^{2\tau \phi_j}
\left(\<\nabla_{g_i} u_{i,j}, \nu\PI_{g_i}\right)^2\<\nabla_{g_i} d_j, \nu\PI_{g_i} \dx \dt \leqslant 0.
\end{align}

\medskip 

{\it Step 4. Conclusion.} From inequalities $(\ref{step2})$ and $(\ref{step3})$ we find  
$$
0\ge k_T\left(  F_u(0)+F_u(T)\right) \ge0.
$$
This last implies that $F_u(0)=0$. Since \eqref{P}-\eqref{initial} is well-posed, the result is followed.

\medskip


\end{proof}

\setcounter{equation}{0}
\section{Dynamics of locally damped Bresse systems}
Let  us consider the semilinear Bresse system
\begin{align} \label{P01}
\left\{ 
\begin{array}{rrr}
\begin{split}
\rho_{1}\varphi_{tt} - k(\varphi_{x}+\psi+l  w)_{x} - k_{0}l  (w_{x}-l \varphi)+a_{1}(x)g_{1}(\varphi_t) + f_1(\varphi,\psi,w) =0,    \\
\rho_{2}\psi_{tt} - b\psi_{xx}+k(\varphi_{x} + \psi+l  w) + a_{2}(x) g_{2}(\psi_t)+ f_2(\varphi,\psi,w) = 0,  \\
\rho_{1}w_{tt} - k_{0}(w_{x}-l \varphi)_{x} + kl (\varphi_{x}+\psi+l  w) +a_{3}(x) g_{3}(w_t) + f_3(\varphi,\psi,w) =0,
\end{split}
\end{array}
\right.
\end{align}
with Dirichlet boundary conditions
\begin{equation} \label{bci}
\varphi(0,t) = \varphi(L,t) = \psi(0,t) = \psi(L,t) = w(0,t) = w(L,t) = 0, \quad t \in \mathbb{R}^{+},
\end{equation}
and with initial condition
\begin{equation} \label{3ci}
\varphi(0) = \varphi_{0},\:\: \varphi_t(0) = \varphi_{1},\:\: \psi(0) = \psi_{0},\:\: \psi_t(0) = \psi_{1}, \:\: w(0)= w_0, \:\: w_t(0) = w_1.
\end{equation}

\subsection{Well-posedness}

In this section, we summarize all the assumptions that will be used to prove the main result. 
We also introduce the well-posedness result along with some energy inequalities.  

\medskip 
\noindent \textbf{Notations.} Henceforth the symbols $L^p(0,L)$ $(p\geqslant 1)$ and $H^m(0,L)$ $(m \in \mathbb{N})$ denote 
the Lebesgue and Sobolev spaces, respectively. 
The norms in $L^p (0,L)$ are indicated by $\|\cdot\|_{p}$ and $\|\cdot\|_{L^{2}(0,L)}\equiv \|\cdot\|$. We will also frequently use the inequality$$
\Vert u \Vert \ls \frac{L}{\pi} \Vert u_x \Vert,\,\, \forall\,u \in H^{1}_{0}(0,L).
$$
\medskip 

\noindent \textbf{Assumptions.} The following hypotheses will be used throughout the paper. 

\noindent $(f.1.)$ The sources functions $f_i \!\in\! C^{1}(\mathbb{R})$ $(i=1,2,3)$ are locally Lipschitz and there exists a function 
$F \!\in\! C^1(\mathbb{R}^3)$ such that $\nabla F = (f_1,f_2,f_3)$.

\noindent $(f.2.)$ There exists constants $0 \leq \alpha < \frac{\pi^2}{2 \beta L^2}$ and  $c_F>0$ such that
\begin{align*}
F(u,v,w)  &\geq  - \alpha \Big[ |u|^{2} + |v|^{2} + |w|^2 \Big] - c_F, \,\, \forall \, u,v,w \in \mathbb{R},\\
\nabla F(u,v,w) \! \cdot \! (u,v,w)  &\geq F(u,v,w) - \alpha \Big[ |u|^{2} + |v|^{2} + |w|^{2} \Big] - c_F, \,\, \forall \, u,v,w \in \mathbb{R},
\end{align*}
where  $\beta>0$ is the constant
\begin{equation*}
\|\varphi_x \|^2 + \|\psi_x\|^2 + \|w_x \|^2 \ls  \beta \Big[ b\|\psi_x\|^2 + k \|\varphi_x + \psi +l w\|^2+ k_0 \|w_x - l \varphi \|^2 \Big].
\end{equation*}

\noindent $(f.3.)$ There exists $c_f > 0$ such that
\begin{equation*}
|\nabla f_i(u,v,w) | \ls c_f \Big[ 1 + |u|^{p-1} + |v|^{p-1} + |w|^{p-1} \Big],\, i=1,2,3,\, p\geqslant 1, \, \forall \, u,v,w \in \mathbb{R}.
\end{equation*}

\noindent $(g.1.)$ The damping  functions $g_{i}\!\in\! C^1(\mathbb{R})$ $(i \!=\! 1,2,3)$ are monotone increasing with $g_{i}(0)\!=\! 0$. 
Moreover, we assume that there exist constants positive constants $m$ and $M>0$ such that
\begin{equation*}
m\leq g^{\prime}_{i}(s) \leq M, \,\,\forall \, s \in \mathbb{R}.
\end{equation*} 

\noindent $(a.1.)$ The localizing functions $a_{i} \!\in\! L^\infty(0,L)$ $(i \!=\! 1,2,3)$ 
are non-negative and there exists  positive constant $a_{0}$ such that
$$a_{i}(x)\geqslant a_{0},\,\, x \in I_{i},\,\, i=1,2,3,$$
where $I_{i}\subset[0,L]$ are open intervals  with $(L_{1},L_{2}) \equiv \bigcap_{i} I_{i}\neq \emptyset$.
\medskip 

\noindent \textbf{Dynamical system generation.} 
Before introducing the well-posedness result, we start with the necessary functional framework.  
First, the finite energy space $H$ of the well-posedness is defined as 
\begin{equation*}
H = H^1_0 (0,L) \times H^1_0 (0,L) \times H^1_0 (0,L) \times  L^2 (0,L)\times  L^2 (0,L)\times  L^2 (0,L).
\end{equation*} 
For $\Z=(\varphi(t), \psi(t), w(t),\tilde{\varphi}(t),\tilde{\psi}, \tilde{w}(t))\!\in\!H$, we define the $H$ norm as 
\begin{equation} \label{norm}
\|\Z\|_{H}^2 =
\rho_1\|\tilde{\varphi}\|^2 +\rho_2\|\tilde{\psi}\|^2+\rho_1\|\tilde{w}\|^2 + b\|\psi_x\|^2 + k \|\varphi_x + \psi +l  w\|^2+ k_0 \|w_x - l  \varphi \|^2.
\end{equation}

Next, let $A : D(A) \subset H \to H$ be the differential operator 
$$
 A\Z 
    \equiv
  \left[ \begin{array}{c}
   \tilde{\varphi} \\
	   \tilde{\psi} \\
	   \tilde{w} \\
      {k}{\rho_{1}}^{-1}  (\varphi_{x}+\psi+l  w)_{x} + {k_{0}l }{\rho_{1}}^{-1}(w_{x}-l \varphi)\\
      {b}{\rho_{2}}^{-1} \psi_{xx}-{k}{\rho_{2}}^{-1}(\varphi_{x} + \psi+l  w) \\
      {k_{0}}{\rho_{1}}^{-1}(w_{x}-l \varphi)_{x} - kl \rho_{1}^{-1}(\varphi_{x}+\psi+l  w) 
    \end{array}\right] ,
$$
with domain
$$
D(A)=\bigl[ H^{2}(0,L)\cap H^{1}_{0}(0,L)\bigl]^{3} \times H^{1}_{0}(0,L)^{3}.
$$
Next, let be $B:D(B)=H \to H$ the damping operator 
$$
B \Z\equiv
  \left[ \begin{array}{c}
		0\\
		0\\
		0\\
      - \alpha_{1}(x)\rho_1^{-1} g_{1}(\tilde{\varphi})  \\
      - \alpha_{2}(x)\rho_2^{-1} g_{2}(\tilde{\psi})\\
      - \alpha_{3}(x)\rho_1^{-1} g_{3}(\tilde{w}) 
  \end{array}\right].
$$
Finally, by $\mathscr{F}: H \to H$, we represent the source terms operator
$$
\mathscr{F}\Z\equiv
  \left[ \begin{array}{c}
  0\\ 
  0\\
  0\\ 
      -\rho_1^{-1}f_1(\varphi,\psi,w)     \\
      -\rho_2^{-1} f_2 (\varphi,\psi,w)   \\
      -\rho_1^{-1} f_3(\varphi,\psi,w)
 \end{array}\right].
$$

Now, using the definitions of operators $A, B, \mathscr{F}$, we can abstract represent the problem as follows
\begin{equation} \label{PC}
\frac{\mbox{d}}{ \dt}\Z(t)\!-\!
\left(A + B \right)\Z(t) = \mathscr{F}(\Z(t)), 
\,\,\Z(0)\equiv\Z_{0}=(\varphi_{0}, \psi_{0}, w_{0}, \varphi_{1},\psi_{1},w_{1}),
\end{equation}
where
$$
\Z(t) =(\varphi(t), \psi(t), w(t),\tilde{\varphi}(t),\tilde{\psi}, \tilde{w}(t))\,\,\mbox{with} 
\,\, \tilde{\varphi} = \varphi_{t}, \; \tilde{\psi}=\psi_{t},\; \tilde{w}=w_t.
$$

\medskip
We observe that the well-posedness of (\ref{PC}) induces the well-posedness for the Problem (\ref{P01}) -(\ref{3ci}). 
In the following, we present the well-posedness for (\ref{PC}). 

\begin{Theorem} [Well-posedness] \label{theo-existence} Assume the validness of Assumptions $(f.1)$-$(g.1)$.
Then for any initial data ${\normalfont \textbf{Z}}_{0} \in H$ and $T>0$, the Cauchy problem $(\ref{PC})$ admits a unique weak solution
${\normalfont \textbf{Z}} \in C([0,T];H)$ that depends continuously on the initial data and is given by the variation of parameters formula
\begin{equation} \label{SolFormula}
{\normalfont \textbf{Z}}(t)= e^{(A+B)t}{\normalfont \textbf{Z}}_{0}
+\int_{0}^{t}e^{(A+B)(t-s)}\mathscr{F}({\normalfont \textbf{Z}}(s)) {\rm d} s, \;\; t \in [0,T]. 
\end{equation}
Moreover, if ${\normalfont \textbf{Z}}_0 \in D(A)$ then the solution is strong.
\end{Theorem}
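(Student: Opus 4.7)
The plan is to cast \eqref{PC} as a semilinear abstract Cauchy problem in $H$ and invoke standard semigroup machinery, using the potential $F$ to control the total energy and prevent blow-up. The argument splits into (i) linear generation by $A$, (ii) local existence via the Lipschitz perturbation $B+\mathscr{F}$, (iii) a global energy bound, and (iv) a $D(A)$-regularity upgrade.

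First I would verify that $A$ with the stated domain generates a $C_0$-group of isometries on $(H,\|\cdot\|_H)$. The energy norm \eqref{norm} is tailored so that integration by parts gives $\<A\Z,\Z\PI_H=0$ for $\Z\in D(A)$; hence $A$ is skew-symmetric. The range condition $R(I\pm A)=H$ reduces to solving a coupled linear elliptic boundary value problem in $[H^2(0,L)\cap H^1_0(0,L)]^3$, handled by Lax--Milgram applied to the bilinear form induced by $\|\cdot\|_H$ (coercive on $H^1_0(0,L)^3$ by Poincar\'e and the positive-definiteness of the quadratic form in \eqref{norm}). Stone's theorem then provides the unitary group $e^{At}$.

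Next I would show that $B+\mathscr{F}\colon H\to H$ is locally Lipschitz. By $(g.1)$, each $g_i$ is globally $M$-Lipschitz; combined with $a_i\in L^\infty(0,L)$ this makes $B$ globally Lipschitz on $H$. For $\mathscr{F}$, the one-dimensional embedding $H^1_0(0,L)\hookrightarrow C[0,L]$ together with $(f.3)$ yields local Lipschitz continuity of $(\varphi,\psi,w)\mapsto f_i(\varphi,\psi,w)$ from $H^1_0(0,L)^3$ into $L^2(0,L)$, so $\mathscr{F}$ is locally Lipschitz from $H$ to $H$. A Banach contraction argument on the fixed-point map defined by \eqref{SolFormula} (for the linear semigroup $e^{At}$, with $B+\mathscr{F}$ as perturbation), restricted to a small ball around $\Z_0$ and a short time interval absorbing the Lipschitz constant, produces a unique local mild solution $\Z\in C([0,T_{\max});H)$; continuous dependence follows by Gr\"onwall applied to the difference of two such solutions.

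For global existence I would introduce the total energy
\begin{equation*}
\mathcal{E}(t) \;=\; \frac{1}{2}\|\Z(t)\|_H^2 \,+\, \int_0^L F(\varphi(t),\psi(t),w(t))\,\dx,
\end{equation*}
verify the dissipation identity
\begin{equation*}
\mathcal{E}(t) \,+\, \int_0^t\!\!\int_0^L \bigl[a_1 g_1(\varphi_t)\varphi_t + a_2 g_2(\psi_t)\psi_t + a_3 g_3(w_t)w_t\bigr]\,\dx\,\dt \;=\; \mathcal{E}(0)
\end{equation*}
first for strong solutions (multiplying the three equations by $\varphi_t,\psi_t,w_t$ and integrating by parts) and extending to mild solutions by density, using $g_i(s)s\ge 0$ to see that $\mathcal{E}$ is non-increasing. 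The lower bound in $(f.2)$, together with Poincar\'e $\|u\|\le(L/\pi)\|u_x\|$ and the constant $\beta$ from $(f.2)$, gives
\begin{equation*}
\int_0^L F(\varphi,\psi,w)\,\dx \;\ge\; -\frac{\alpha\beta L^2}{\pi^2}\|\Z\|_H^2 \,-\, c_F L,
\end{equation*}
so the hypothesis $\alpha<\pi^2/(2\beta L^2)$ forces the coefficient of $\|\Z\|_H^2$ to be strictly less than $1/2$ and yields $\|\Z(t)\|_H^2\le 2\mathcal{E}(0)+\text{const}$ uniformly in $t$, ruling out blow-up and producing $T_{\max}=\infty$. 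When $\Z_0\in D(A)$, standard strong-solution theory for Lipschitz perturbations of linear generators (applied in the graph norm of $A$) upgrades the mild solution to $C([0,T];D(A))\cap C^1([0,T];H)$. The main obstacle is the density/regularization step that validates the energy identity for mild solutions; this is where assumptions $(f.1)$--$(f.3)$ and $(g.1)$ must be used jointly to pass to the limit in both the nonlinear damping and the source term, while everything else reduces to routine Banach fixed-point and $C_0$-semigroup reasoning.
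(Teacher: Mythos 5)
Your argument is correct in substance, but note that the paper does not actually prove Theorem \ref{theo-existence}: it simply states that the result ``is known and can be found in \cite{MaMonteiro}'' and records the consequences (semigroup generation, energy identity \eqref{energy-ID}, and the bounds \eqref{energia-domina}). What you have written is, in effect, the proof that the cited reference supplies: skew-adjointness of $A$ in the energy inner product plus a Lax--Milgram range condition to generate the unitary group, global Lipschitz continuity of $B$ from $(g.1)$, local Lipschitz continuity of $\mathscr{F}$ via the one-dimensional embedding $H^1_0(0,L)\hookrightarrow C[0,L]$ and $(f.1)$, $(f.3)$, a local fixed-point argument, and globalization through the dissipation identity combined with the coercivity forced by $\alpha<\pi^2/(2\beta L^2)$ in $(f.2)$. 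The one place where your route diverges from the statement as written is the variation-of-parameters formula: \eqref{SolFormula} is expressed in terms of the \emph{nonlinear} semigroup $e^{(A+B)t}$ ($B$ is nonlinear because the $g_i$ are), whereas your contraction mapping is built on the linear group $e^{At}$ with $B+\mathscr{F}$ folded into the Duhamel integral. Since $B$ is monotone and globally Lipschitz, $A+B$ is m-dissipative and generates a nonlinear contraction semigroup, and the two mild formulations produce the same solution; but to literally obtain \eqref{SolFormula} you should either first generate $e^{(A+B)t}$ and then perturb by the locally Lipschitz $\mathscr{F}$ alone, or add a short remark identifying the two mild solutions. With that routine adjustment, your proof is complete and is arguably more informative than the paper's citation; the paper's approach buys brevity, yours buys self-containedness.
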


The well-posedness result stated above is known and can be found in \cite{MaMonteiro}. 
Theorem $\ref{theo-existence}$, in particular, implies that the map 
$ H \ni \Z_{0} \mapsto \Z(t)=(\varphi, \psi, w,\varphi_{t},\psi_{t}, w_{t})$, where $\Z(t)$ solves (\ref{PC}),  
defines a strongly continuous semigroup $\{S(t)\}_{t \geqslant 0}$ on $H$.

\medskip 
\noindent \textbf{Energy.} 
Let $\Z(t)=(\varphi(t),\psi(t),w(t),\varphi_{t}(t),\psi_{t}(t),w_{t}(t))$ be a solution of $(\ref{P01})$-$(\ref{3ci})$. 
The energy is defined by the following functional
\begin{equation} \label{energy-def}
\mathscr{E}_{\Z}(t) \equiv {E}_{\Z}(t) + \int_{0}^{L}\! F(\varphi,\psi,w)\dx =  \|\Z(t)\|_{H}^{2} + \int_{0}^{L}\! F(\varphi,\psi,w)\dx.
\end{equation}

\noindent The weak solution $\Z(t)=(\varphi(t),\psi(t),w(t),\varphi_{t}(t),\psi_{t}(t),w_{t}(t))$ satisfies the energy identity      
\begin{equation} \label{energy-ID}
 \mathscr{E}_{\Z}(t) + \int_{s}^{t}\int_{0}^{L} \Bigl[\alpha_{1}(x)g_{1}(\varphi_t)\varphi_{t}
+\alpha_{2}(x)g_{2}(\psi_t)\psi_{t}
+\alpha_{3}(x)g_{3}(w_t)w_{t}\Bigl]
\dx{\rm d}\tau =  \mathscr{E}_{\Z}(s),
\end{equation}
for all $0 \leqslant s < t$.

\noindent As in \cite{MaMonteiro}, the energy $\mathscr{E}_{\Z}(\cdot)$ (\ref{energy-def}) and the norm $\|\cdot\|_{H}$ (\ref{norm}) satisfy 
\begin{equation} \label{energia-domina}
 C_{E}\|\Z(t)\|_{H}^{2} - Lc_F \leqslant \mathscr{E}_{\Z}(t) 
\leqslant  \|\Z(t)\|_{H}^{2} + c_{E}(1+\|\Z(t)\|_{H}^{p+1}),\,\,\forall \, t \geqslant  0,
\end{equation}
for some  positive constants $C_{E}$ and $c_{E}$.

\subsection{Global attractors} 

\subsubsection{Abstract existence theorems} 

Some essential definitions and results from the theory of attractors for gradient systems is collected 
\begin{Definition}
A global attractor for a dynamical system $(H,S(t))$, with evolution operator $\{S(t)\}_{t \geqslant 0}$ 
on a complete metric space $H$ is defined as a  a compact set $\mathscr{A} \!\subset\! H$ that is fully
invariant, that is $S(t)\mathscr{A} = \mathscr{A}$  for all $t \geqslant 0$, and uniformly attracts all bounded subsets of $H$
\begin{align*}
\lim_{t\to \infty} \sup\Big\{{\rm dist}_{H}(S(t)\normalfont{\Z},\mathscr{A})\,|\, \normalfont{\Z}\in \mathscr{B}\Big\} 
= 0,\,\mbox {for any bounded set} \,\, \mathscr{B}\!\subset\! H.
\end{align*}
\end{Definition}

\begin{Definition}
The fractal dimension of a compact set $\mathscr{A} \subset H$ in a metric space $H$ is defined as
$$
\dim_{f}^{H}\!(\mathscr{A}) = \limsup_{\varepsilon \to 0} \frac{\ln N_{\varepsilon}(\mathscr{A})}{\ln(1/{\varepsilon})} ,
$$
where $N_{\varepsilon}(\mathscr{A})$ is the minimal number of closed balls of radius $\varepsilon$ which
cover the set $A$.
\end{Definition}

To ascertain the existence of a global attractor, we use the concept of gradient and quasi-stable dynamical systems. 
The global attractor for this systems admits additional structure and properties: 
(i) the attractor for gradient systems has a regular structure, that is, 
the attractor is described by the unstable manifold emanating from the set of stationary points and 
(ii) quasi-stable systems provide several properties of attractors, such as finite dimensionality. 

\begin{Definition}\label{Lya} Let $\mathscr{Y}\subset H$ be a forward invariant set of a dynamical
system $(H,S(t))$. 
${\rm(i)}$ A continuous functional $\Phi: \mathscr{Y} \to \mathbb{R}$ is said to be a Lyapunov
function on $\mathscr{Y}$ for $(H,S(t))$ if the map $t \mapsto \Phi(S(t)\normalfont{\Z})$ is non-increasing for any $\normalfont{\Z} \in \mathscr{Y}$. 
${\rm(ii)}$ The Lyapunov function is said to be strict on $\mathscr{Y}$ if the equation if $\Phi(S(t)\normalfont{\Z})=\Phi(\normalfont{\Z})$ for all $t>0$ 
for some $\normalfont{\Z} \in \mathscr{Y} $ implies that $y$ is a stationary point of $(H,S(t))$. 
${\rm(iii)}$ The dynamical system  $(H,S(t))$ is said to be gradient if there exists a strict Lyapunov function on the whole phase space $H$.
\end{Definition}

\begin{Definition}\label{qs}
Let $X,Y$ be reflexive Banach spaces, $X$ compactly embedded in $Y$. 
We consider a dynamical system $(H,S(t))$ with $H= X \times Y$ and evolution operator defined by
\begin{equation} \label{DS}
S(t)\normalfont{\Z}= (u(t),u_{t}(t)), \qquad \normalfont{\Z}=(u(0), u_{t}(0)) \in {H},
\end{equation}
where the function $u$ possess the property
\begin{equation}\label{Regularity}
u \in C([0,\infty); X)\cap C^{1}([0,\infty); Y).
\end{equation}
A dynamical system of the the form $(\ref{DS})$ with regularity $(\ref{Regularity})$
is said to be quasi-stable on a set
$\mathscr{B} \subset H$, if there exist a compact semi-norm $[ \, \cdot \, ]_{X}$
on $X$ and non-negative scalar functions $a(t), b(t), c(t)$, such that, 
${\rm(i)}$ $a(t), b(t)$ are locally bounded on $[0,\infty)$,
${\rm(ii)}$ $b(t)\in L^1(0,\infty)$ with $\lim_{t\to \infty}b(t)=0$ and 
${\rm(iii)}$ for any $\normalfont{\Z}^1,\normalfont{\Z}^2 \in \mathscr{B}$ the following estimates hold true
\begin{equation} \label{L}
\| S(t)\normalfont{\Z}^1 - S(t)\normalfont{\Z}^2 \|_{H}^2 \ls a(t)\|\normalfont{\Z}^1 - \normalfont{\Z}^2 \|_{H}^2 ,
\end{equation}
and
\begin{equation} \label{SI}
\|S(t)\normalfont{\Z}^1 - S(t)\normalfont{\Z}^2 \|_{H}^2 \ls  b(t) \| \normalfont{\Z}^1 -\normalfont{\Z}^2 \|_{H}^2 
+ c(t) \sup_{0<s<t} [ u^1(s)-u^2(s) ]_{X}^2 ,
\end{equation}
where $S(t)\normalfont{\Z}^{i}= (u^{i}(t),u_{t}^{i}(t))$, $i=1,2$.
\end{Definition}

Unifying the abstracts results from \cite{Yellow} 
we arrive at the following criteria for existence and properties of global attractors.

\begin{Theorem} \label{Conclusion}
Let $(H,S(t))$ be a gradient quasi-stable dynamical system. 
Assume its Lyapunov function $\Phi(\cdot)$ is bounded from above on any bounded
subset of $H$ and the set $\Phi(R) = \big\{ \normalfont{\Z}\in H\,|\, \Phi(\Z)\ls R \big\}$ is bounded for every $R$. 
If the set $\mathscr{N}$ of stationary points of $(H,S(t))$  is bounded, then $(H,S(t))$ possesses a finite dimensional global
attractor $\mathscr{A}$ defined by the unstable manifold emanating from set of stationary solution. 
Moreover, any trajectory stabilizes to the set $\mathscr{N}$ of stationary points, that is, 
$$\lim_{t\to +\infty}{\rm dist}_{H}(S(t)\normalfont{\Z},\mathscr{N})=0,\,\, \forall\,  \normalfont{\Z}\in H.$$
\end{Theorem}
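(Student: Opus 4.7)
The plan is to assemble Theorem \ref{Conclusion} from four standard building blocks of the theory of infinite-dimensional gradient dynamical systems, all available in \cite{Yellow}. First, I would establish \emph{dissipativity}: because $\Phi$ is a strict Lyapunov function, is bounded from above on bounded sets, and has bounded sublevel sets $\Phi(R)$, every bounded orbit remains bounded along the semiflow. Combined with boundedness of the set $\mathscr{N}$ of stationary points, a LaSalle-type argument then produces a single bounded absorbing set $\mathscr{B}_0 \subset H$.

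Second, I would deduce \emph{asymptotic smoothness} on $\mathscr{B}_0$ directly from quasi-stability. Inequality \eqref{SI} provides, for any pair of trajectories with initial data in $\mathscr{B}_0$, a decaying factor $b(t)$ together with a correction controlled by the compact seminorm $[\,\cdot\,]_X$. A standard Khanmamedov-type argument (available in \cite{Yellow}) then extracts a Cauchy subsequence from any bounded sequence of translates $S(t_n)\Z_n$, yielding asymptotic compactness of $(H,S(t))$ on $\mathscr{B}_0$, equivalently asymptotic smoothness of the dynamical system.

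With dissipativity and asymptotic smoothness in hand, the existence of a compact global attractor $\mathscr{A}$ is classical. The gradient structure upgrades its description to $\mathscr{A}=\mathscr{M}^{u}(\mathscr{N})$, the unstable manifold emanating from $\mathscr{N}$, since every complete bounded trajectory in a gradient system backward-converges to an equilibrium. Finite fractal dimensionality is obtained by re-applying \eqref{SI} \emph{on the attractor itself}, invoking the abstract finite-dimensionality theorem for quasi-stable systems in \cite{Yellow}. Finally, trajectory stabilization $\mathrm{dist}_{H}(S(t)\Z,\mathscr{N})\to 0$ follows from compactness of $\mathscr{A}$: the $\omega$-limit set of each initial condition is a nonempty, connected, fully invariant subset of $\mathscr{A}$, and in a gradient system such sets are necessarily contained in $\mathscr{N}$.

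The step I expect to be hardest is the first one, namely promoting the three hypotheses (Lyapunov bounded above on bounded sets, sublevel sets bounded, $\mathscr{N}$ bounded) into a genuine bounded \emph{absorbing} set rather than mere orbit-wise boundedness. One must simultaneously exploit strict monotonicity of $\Phi$ along non-equilibrium trajectories and boundedness of $\mathscr{N}$, in the spirit of LaSalle's invariance principle, to rule out trajectories drifting to infinity while keeping $\Phi$ finite. Once $\mathscr{B}_0$ is located, the remaining steps amount to a routine unpacking of the quasi-stable systems machinery in \cite{Yellow}.
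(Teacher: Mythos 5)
The paper offers no proof of this theorem: it is stated as a direct assembly of abstract results from \cite{Yellow} (Chueshov--Lasiecka), so there is nothing in the text to compare your argument against line by line. That said, your outline is essentially the standard proof of those cited results, and it is correct in all its main steps: asymptotic smoothness from the quasi-stability inequality \eqref{SI}, existence of the attractor, the characterization $\mathscr{A}=\mathbb{M}_{+}(\mathscr{N})$ from the gradient structure, finite fractal dimension by re-applying \eqref{SI} on the attractor, and stabilization of individual trajectories via $\omega$-limit sets. The one place where your framing deviates from the standard route --- and where you have, if anything, overestimated the difficulty --- is the first step. You propose to manufacture a bounded \emph{absorbing} set by a LaSalle-type argument and flag this as the hardest point. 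The usual argument never constructs an absorbing set directly; it splits the task into two easy pieces. First, orbits of bounded sets are bounded: if $\mathscr{B}$ is bounded then $R=\sup_{\mathscr{B}}\Phi<\infty$ by hypothesis, and monotonicity of $t\mapsto\Phi(S(t)\Z)$ keeps the whole forward orbit inside the sublevel set $\Phi(R)$, which is bounded by hypothesis. Second, point dissipativity: asymptotic smoothness plus the bounded orbit gives a nonempty compact attracting $\omega$-limit set for each $\Z$, on which $\Phi$ is constant; strictness of the Lyapunov function forces $\omega(\Z)\subset\mathscr{N}$, and since $\mathscr{N}$ is bounded every trajectory eventually enters a fixed bounded neighborhood of $\mathscr{N}$. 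These two facts feed directly into the Hale-type existence criterion (asymptotically smooth $+$ point dissipative $+$ bounded orbits of bounded sets $\Rightarrow$ compact global attractor), and the stabilization claim $\mathrm{dist}_H(S(t)\Z,\mathscr{N})\to 0$ drops out of the same $\omega$-limit computation rather than needing a separate argument at the end. So your plan works, but the ``hard'' step dissolves once it is decomposed this way.
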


\medskip 
We now state the main result of the present chapter.
\begin{Theorem} \label{Main}
Under the Assumptions $(f.1)$-$(a.1)$ the dynamical system $(H,S(t))$ generated by the problem
$(\ref{P01})$-$(\ref{3ci})$ has a global attractor $\mathbf{A}$ characterized by
$$
\mathbf{A} =\mathbb{M}_{+}(\mathcal{N}),
$$
where $\mathbb{M}_{+}(\mathcal{N})$ is the unstable manifold emanating from $\mathcal{N}$, 
the set of stationary points of $\{S(t)\}_{t \ge 0}$.
\end{Theorem}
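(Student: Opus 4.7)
The plan is to verify the hypotheses of Theorem 4.3 (Conclusion) for the dynamical system $(H, S(t))$ generated by \eqref{PC}, decomposing the argument into the two strategic steps $(a)$ and $(b)$ outlined in the introduction.

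\textbf{Step 1: Gradient structure via the UCP.} I would take $\Phi \equiv \mathscr{E}_{\Z}$, defined in \eqref{energy-def}. The energy identity \eqref{energy-ID} together with the monotonicity of $g_i$ in $(g.1)$ immediately gives that $t\mapsto \mathscr{E}_{\Z}(S(t)\Z_0)$ is non-increasing, so $\Phi$ is a Lyapunov function. The strictness requires the UCP. Suppose $\mathscr{E}_{\Z}(S(t)\Z_0) = \mathscr{E}_{\Z}(\Z_0)$ for all $t \ge 0$; then \eqref{energy-ID} and $(g.1)$ force $\varphi_t = \psi_t = w_t = 0$ a.e.\ on $(L_1,L_2)\times(0,\infty)$, where $(L_1,L_2)=\bigcap_i I_i$ is the common subset of active damping from $(a.1)$. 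Differentiating \eqref{P01} in $t$ (for strong solutions, then passing to weak solutions by density) shows that $(\varphi_t,\psi_t,w_t)$ solves a coupled system of exactly the form \eqref{P} with $n=3$, wave speeds $\sqrt{k/\rho_1}, \sqrt{b/\rho_2}, \sqrt{k_0/\rho_1}$, and coupling $f_i$ linear in the unknowns (entries of $f_i$ being combinations of $\partial_x u_j$ and $u_j$ with bounded coefficients coming from $\nabla f_i(\varphi,\psi,w)$ bounded by $(f.3)$ on the orbit), satisfying the growth assumption \eqref{F}. Since $\omega = (L_1,L_2)$ is a non-empty open subset of $[0,L]$, Theorem \ref{mainUCP} gives $\varphi_t \equiv \psi_t \equiv w_t \equiv 0$ on $[0,L]\times[0,T]$, i.e.\ $S(t)\Z_0$ is a stationary point. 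This yields the gradient property; boundedness of $\Phi$ on bounded sets and coercivity of the sublevel sets $\Phi(R)$ follow from \eqref{energia-domina} and the constraint $\alpha < \pi^2/(2\beta L^2)$ in $(f.2)$, while boundedness of the stationary set $\mathscr{N}$ follows from testing the stationary equations with $(\varphi,\psi,w)$ and using $(f.2)$.

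\textbf{Step 2: Quasi-stability.} For two weak solutions $\Z^1,\Z^2$ with initial data in a bounded forward invariant set $\mathscr{B}$, set $(\phi,\Psi,W) = (\varphi^1-\varphi^2, \psi^1-\psi^2, w^1-w^2)$. This difference satisfies a coupled wave system with monotone damping terms $a_i(x)(g_i(\partial_t u^1_i)-g_i(\partial_t u^2_i))$ and right-hand sides $f_i(\Z^1)-f_i(\Z^2)$, the latter being compact lower-order perturbations thanks to $(f.3)$ and the Sobolev embedding $H^1_0(0,L)\hookrightarrow C[0,L]$. The core task is to establish the stabilizability inequality \eqref{SI} on $\mathscr{B}$: a standard multiplier scheme (piecewise cut-off multipliers supported in $I_i$ plus the usual $(\varphi_x+\psi+\ell w)$-type multipliers adapted to the Bresse geometry, as in \cite{charles-soriano, MaMonteiro}) combined with an observability/compactness-uniqueness argument reduces the problem to showing that any sequence of energy-bounded solutions whose damping terms tend to zero in $L^2$ must converge to zero modulo a compact remainder. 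That compactness-uniqueness step is precisely where Theorem \ref{mainUCP} is invoked a second time: after extracting a limit that is damped to zero on $(L_1,L_2)\times(0,T)$, the UCP forces the limit to vanish, which upgrades the observability inequality from its weak form to an inequality with a genuinely lower-order compact term in the sense of \eqref{SI}. The Lipschitz bound \eqref{L} is routine from $(g.1)$ and the local Lipschitz property in $(f.1)$ on bounded sets.

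\textbf{Conclusion.} With the strict Lyapunov function from Step 1 and the quasi-stability estimate on bounded absorbing sets from Step 2, Theorem \ref{Conclusion} applies directly, producing a finite-dimensional global attractor $\mathbf{A} = \mathbb{M}_+(\mathcal{N})$.

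\textbf{Main obstacle.} The delicate step is the compactness-uniqueness argument giving \eqref{SI} without imposing the equal wave speed condition \eqref{equal-speed}. The three wave speeds $\sqrt{k/\rho_1},\sqrt{b/\rho_2},\sqrt{k_0/\rho_1}$ are independent, so no single Riemannian multiplier simultaneously linearizes the three energies; this is exactly why the UCP had to be developed in the general form \eqref{P} with independent metrics $g_i$, and why applying Theorem \ref{mainUCP} to the time-differentiated limit system is the critical input of the proof.
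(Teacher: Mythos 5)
Your overall architecture (verify the hypotheses of Theorem \ref{Conclusion} via gradient structure plus quasi-stability) and your Step 1 coincide with the paper: Proposition \ref{P-Gradient} takes $\Phi=\mathscr{E}_{\Z}$, uses the energy identity \eqref{energy-ID} and $(a.1)$, $(g.1)$ to get $\varphi_t=\psi_t=w_t=0$ on $(L_1,L_2)\times[0,T]$, differentiates \eqref{uc1} in $t$ to land in the class \eqref{P}, and invokes Theorem \ref{mainUCP}; the boundedness of $\mathscr{N}$ is obtained exactly as you indicate, by testing the stationary system with $(\varphi,\psi,w)$ and using $(f.2)$ together with \eqref{energia-domina}.

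Your Step 2, however, departs from the paper in a way that matters. The paper's Proposition \ref{P-Quasi} establishes the stabilizability inequality \eqref{SI} by a \emph{direct} multiplier computation on the difference system \eqref{B4} (multipliers $\tphi_x\xi$, then $\tphi\eta$, then $\tphi_t$, etc.), in which the lower-order terms $\sup_{\sigma}\bigl[\|\tphi(\sigma)\|_{2p}^2+\|\tpsi(\sigma)\|_{2p}^2+\|\tw(\sigma)\|_{2p}^2\bigr]$ are simply \emph{retained} and declared to be the compact seminorm $[\,\cdot\,]_X$ required by Definition \ref{qs}. No compactness--uniqueness argument appears and the UCP is never invoked a second time. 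Your description has the logic of compactness--uniqueness inverted: that device serves to \emph{absorb} a compact remainder into the left-hand side (yielding genuine observability/exponential decay), whereas \eqref{SI} explicitly tolerates such a remainder, so nothing needs to be absorbed. Moreover, running compactness--uniqueness on differences of trajectories of the nonlinear problem is delicate, since the limiting system's coefficients depend on the extracted subsequence of trajectories and one would need the UCP uniformly along that family; the paper's direct route avoids this entirely. This is not fatal to your plan --- replacing your Step 2 by the direct multiplier estimate recovers the paper's proof --- but as written the second invocation of Theorem \ref{mainUCP} is both unnecessary and mischaracterized, and the claim that the UCP ``upgrades'' the inequality to one \emph{with} a compact term should be deleted. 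The correct summary is: the UCP enters exactly once, in the strictness of the Lyapunov function.
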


\subsubsection{Gradient structure and quasi-stability}

Our strategy centers on establishing the conditions from the Theorem \ref{Conclusion}.  
Starting exhibiting the gradient structure for $(H, S(t))$ and focusing our attention on the strictness of the Lyapunov function 
where the new observability result stated in Theorem \ref{mainUCP} plays an essential role in the proof.

\begin{Proposition} 
\label{P-Gradient} 
Let the assumptions of Theorem $\ref{Main}$ be satisfied. 
Then, $(H,S(t))$ is a gradient dynamical system.
\end{Proposition}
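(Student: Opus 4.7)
The Lyapunov candidate is the full energy $\Phi(\mathbf{Z}) := \mathscr{E}_{\mathbf{Z}}$ defined in \eqref{energy-def}. Continuity of $\Phi$ on $H$ follows from \eqref{energia-domina} combined with the locally Lipschitz property of $F$ in (f.1), and monotonicity along every trajectory is immediate from the energy identity \eqref{energy-ID}, since each damping summand $a_i(x) g_i(\partial_t u_i) \partial_t u_i$ is non-negative by (g.1). Thus the entire content of the proposition reduces to showing that $\Phi$ is \emph{strict} in the sense of Definition \ref{Lya}, which is where Theorem \ref{mainUCP} enters.

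Suppose $\Phi(S(t)\mathbf{Z}_0) = \Phi(\mathbf{Z}_0)$ for every $t > 0$. Inserting this into \eqref{energy-ID} and using the non-negativity of the integrand forces each damping summand to vanish almost everywhere on $(0,L)\times(0,\infty)$. Because $g_i'\geqslant m>0$ and $g_i(0)=0$ give $s\,g_i(s)>0$ for $s\neq 0$, and because $a_i(x)\geqslant a_0>0$ on $I_i$, I deduce that $\varphi_t$, $\psi_t$, $w_t$ all vanish on the common open interval $\omega := (L_1, L_2) \subset \bigcap_i I_i$, uniformly in $t$.

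Fix $T>0$ as large as required by Theorem \ref{mainUCP}, and first take $\mathbf{Z}_0 \in D(A)$ so that the strong solution gives $U_i := \partial_t u_i \in C([0,T];H_0^1(0,L)) \cap C^1([0,T];L^2(0,L))$, where $u_1=\varphi$, $u_2=\psi$, $u_3=w$. Differentiating \eqref{P01} in $t$ and dividing by the mass densities produces a coupled system of the form
$$
\partial_t^2 U_i - \gamma_i \partial_x^2 U_i = f^{\star}_i(U_1, U_2, U_3), \qquad i = 1,2,3,
$$
with $\gamma_1 = k/\rho_1$, $\gamma_2 = b/\rho_2$, $\gamma_3 = k_0/\rho_1$, and right-hand sides that are constant-coefficient linear combinations of $U_j, \partial_x U_j$ from the Bresse coupling, plus lower-order terms $-\rho_i^{-1} \partial_k f_i(\varphi,\psi,w) U_k$ (with $L^\infty$ coefficients by (f.3) together with $H_0^1(0,L) \hookrightarrow L^\infty(0,L)$) and the damping perturbation $-\rho_i^{-1} a_i(x) g_i'(\partial_t u_i) \partial_t U_i$ (with $L^\infty$ coefficient by (a.1) and (g.1)). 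Because $F_{u,i}$ in \eqref{Ei} already carries the term $|\partial_t U_i|^2$, the global energy bound \eqref{F} is satisfied for this system. Theorem \ref{mainUCP} applied on the observation window $\omega \times [0,T]$ then yields $\varphi_t \equiv \psi_t \equiv w_t \equiv 0$ on $[0,L] \times [0,T]$; substituting back into \eqref{P01} shows that $(\varphi_0,\psi_0,w_0)$ solves the corresponding stationary problem with zero initial velocity, i.e., $\mathbf{Z}_0$ is a fixed point of $S(t)$. A standard density argument using the continuous dependence in Theorem \ref{theo-existence} extends the conclusion to arbitrary $\mathbf{Z}_0 \in H$.

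The main obstacle I anticipate is this identification step: one must verify carefully that the damping perturbation $a_i g_i'(\partial_t u_i) \partial_t U_i$, which does not literally match the structural form \eqref{f_i}, is nevertheless absorbed by the energy bound \eqref{F} that drives the Carleman estimate of Theorem \ref{carleman}, so that Theorem \ref{mainUCP} genuinely applies to the differentiated Bresse system with the required regularity class. The remaining ingredients---monotonicity of the energy, localization of the vanishing set of $\partial_t u_i$, and the implication that time-independence of $(\varphi,\psi,w)$ forces $\mathbf{Z}_0$ to be stationary---are routine.
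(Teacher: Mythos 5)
Your argument follows the paper's route almost exactly: the full energy $\mathscr{E}_{\Z}$ is the Lyapunov function, the energy identity \eqref{energy-ID} gives monotonicity, constancy of the energy annihilates the damping integral and hence forces $\varphi_t=\psi_t=w_t=0$ on $(L_1,L_2)\times[0,T]$, and Theorem \ref{mainUCP} is applied to the time-differentiated system for $(u^1,u^2,u^3)=(\varphi_t,\psi_t,w_t)$ with speeds $\gamma_1=k/\rho_1$, $\gamma_2=b/\rho_2$, $\gamma_3=k_0/\rho_1$, concluding stationarity.

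The one place where you diverge --- and where you correctly sense trouble --- is that you differentiate the \emph{damped} system \eqref{P01} and carry the term $-\rho_i^{-1}a_i(x)g_i'(\partial_t u_i)\partial_t U_i$ into the right-hand side. That term contains $\partial_t U_i$, a time derivative of the unknown, which is not admissible in the structural form \eqref{f_i} (only $u_j$ and $\partial_x u_j$ may appear), so Theorem \ref{mainUCP} does not apply to the system as you have written it; as stated this is a genuine gap, not a technicality to be ``absorbed'' by \eqref{F}. The fix is already contained in your own second paragraph: the energy identity gives $a_i(x)g_i(\partial_t u_i)\partial_t u_i=0$ a.e.\ on all of $(0,L)\times(0,\infty)$, and since $g_i$ is strictly increasing with $g_i(0)=0$, wherever $a_i(x)>0$ one has $\partial_t u_i=0$ and hence $g_i(\partial_t u_i)=0$; thus $a_i(x)g_i(\partial_t u_i)\equiv 0$ a.e.\ as a function of $(x,t)$. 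Consequently $(\varphi,\psi,w)$ solves the \emph{undamped} system \eqref{uc1}, its distributional time derivative carries no damping contribution at all, and the resulting forcings are exactly of the form \eqref{f_i} with coefficients controlled via $(f.3)$ and the embedding $H^1_0(0,L)\hookrightarrow L^{\infty}(0,L)$. This is precisely how the paper proceeds; with that correction your proof coincides with the paper's, and your remaining steps (regularization through $D(A)$ plus density, and the passage from $\varphi_t\equiv\psi_t\equiv w_t\equiv 0$ to stationarity of $\Z_0$) are sound.
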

\proof The dynamical system $(H,S(t))$ is gradient with full energy $\mathscr{E}_{\Z}(\cdot)$ 
- defined in (\ref{energy-def}) - being the strict Lyapunov function $\Phi(\cdot)$. 
In fact, from identity (\ref{energy-ID}), we find that  $t \rightarrow \Phi(S(t)\Z)$ 
is a non-increasing function for any $\Z \in H$.
 
Next, we suppose that $\Phi(S(t)\Z)=\Phi(\Z)$, for all $t > 0$. Then, from identity (\ref{energy-ID}), we obtain 
\begin{equation*}
\int_{s}^{t}\int_{0}^{L} \Bigl[\alpha_{1}(x)g_{1}(\varphi_t)\varphi_{t}
+\alpha_{2}(x)g_{2}(\psi_t)\psi_{t}
+\alpha_{3}(x)g_{3}(w_t)w_{t}\Bigl]
\dx{\rm d}\tau =  0.
\end{equation*}

\noindent This shows that $\varphi_{t}\!=\! \psi_{t}\!=\! w_{t}\!=\!0$ a.e. in $(L_{1},L_{2})\times [0,T] $, where $(L_{1},L_{2})=\bigcap_{i} I_{i}$. 
Thus, $(\varphi,\psi,w)$ satisfies the problem 

\begin{align} \label{uc1}
\left\{ 
\begin{array}{rrr}
\begin{split}
\rho_{1}\varphi_{tt} - k(\varphi_{x}+\psi+l  w)_{x} - k_{0}l  (w_{x}-l \varphi) + f_1(\varphi,\psi,w) =0  & \;\;  \mbox{in} \:\: (0,L) \times [0,T], \\
\rho_{2}\psi_{tt} - b\psi_{xx}+k(\varphi_{x} + \psi+l  w) + f_2(\varphi,\psi,w) = 0 &\;\;  \mbox{in} \:\: (0,L) \times [0,T],  \\
\rho_{1}w_{tt} - k_{0}(w_{x}-l \varphi)_{x} + kl (\varphi_{x}+\psi+l  w) + f_3(\varphi,\psi,w) =0  &\;\;  \mbox{in} \:\: (0,L) \times [0,T].
\end{split}
\end{array}
\right.
\end{align}

\noindent Using the notation $u^{1}\!=\!\varphi_{t}$, $u^{2}\!=\!\psi_{t}$, $u^{3}\!=\!w_{t}$ 
and taking the derivative in the distributional sense of $(\ref{uc1})$, we find that
$(u^{1},u^{2},u^{3})$ solves the problem 
\begin{align*}
\left\{ 
\begin{array}{rrr}
\begin{split}
u^{1}_{tt} - \gamma_{1} u^{1}_{xx} = F^1(u^{1},u^{2},u^{3})    & \;\;  \mbox{in} \:\: (0,L) \times [0,T],  \\
u^{2}_{tt} - \gamma_{2} u^{2}_{xx} = F^2(u^{1},u^{2},u^{3})    & \;\;  \mbox{in} \:\: (0,L) \times [0,T],  \\
u^{3}_{tt} - \gamma_{2} u^{3}_{xx} = F^3(u^{1},u^{2},u^{3})    & \;\;  \mbox{in} \:\: (0,L) \times [0,T],
\end{split}
\end{array}
\right.
\end{align*}
with $\gamma_{1}=\frac{k}{\rho_{1}}$, $\gamma_{2}=\frac{b}{\rho_{2}}$, $\gamma_{3}=\frac{k_{0}}{\rho_{1}}$ and with forcing $F^{i}(\cdot)$ defined by
\begin{align*}
F^1(u^{1},u^{2},u^{3}) &= \gamma_{1} u^{2}_{x} + \gamma_{1} u^{3}_{x} + k_{0}\rho_{1}^{-1}l  (u^{3}_{x}-l  u^{1}) - \rho_{1}^{-1} \partial_{t}[f_{1}(\varphi,\psi,w)], \\
F^2(u^{1},u^{2},u^{3}) &= - k\rho_{2}^{-1}(u^{1}_{x} - u^{2}+l  u^{3}) - \rho_{2}^{-1} \partial_{t}[f_{2}(\varphi,\psi,w)],  \\
F^3(u^{1},u^{2},u^{3}) &= - \gamma_{3}l  u^{1}_{x} - k\rho_{1}^{-1}l (u^{1}_{x}+u^{2}+l  u^{3}) - \rho_{1}^{-1} \partial_{t}[f_{3}(\varphi,\psi,w)]. 
\end{align*}
Now, we apply the UCP - Theorem \ref{mainUCP} - to conclude that $(\varphi_{t},\psi_{t},w_{t})\!=\!(u^{1},u^{2},u^{3})\!=\!(0,0,0)$. 
Therefore, the solution $\Z\! \in H$ must be stationary. This implies that the energy $\mathscr{E}_{\Z}(\cdot)$ is strict on $H$.  
\qed

\medskip
 
Our next aim is to show the quasi-stability of $(H,S(t))$. According to the Definition \ref{qs}, 
the difference of two trajectories should obeys estimates $(\ref{L})$ and  $(\ref{SI})$. 
Taking the advantage of the locally Lipschitz property of $f_{i}(\cdot)$ and the variation of parameter formula (\ref{SolFormula}), 
one can easily show the validity of $(\ref{L})$. 
Next, by means of multiplier technique, we prove the stabilization inequality $(\ref{SI})$.  

\begin{Proposition} 
\label{P-Quasi} 
Let the assumptions of Theorem $\ref{Main}$ be satisfied. 
Then, $(H,S(t))$ is a quasi-stable dynamical system.
\end{Proposition}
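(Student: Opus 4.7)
The plan is to verify the two estimates of Definition \ref{qs} for differences of trajectories lying in a bounded forward-invariant set $\mathscr{B} \subset H$. Let $\Z^1, \Z^2 \in \mathscr{B}$ and set $z(t) = S(t)\Z^1 - S(t)\Z^2$ with components $(\bar\varphi, \bar\psi, \bar w, \bar\varphi_t, \bar\psi_t, \bar w_t)$, which solves the linearized Bresse system obtained by subtracting two copies of $(\ref{P01})$, with forcing given by the differences $f_i(\Z^1)-f_i(\Z^2)$ and $a_i[g_i(\cdot^1)-g_i(\cdot^2)]$.

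Estimate $(\ref{L})$ follows from a standard energy estimate: multiplying the three difference equations by $\bar\varphi_t, \bar\psi_t, \bar w_t$, using monotonicity of $g_i$ from $(g.1)$ (which renders the damping contribution non-negative) and the local Lipschitz property of $f_i$ on $\mathscr{B}$ supplied by $(f.1)$ and $(f.3)$, Gronwall yields $\|z(t)\|_H^2 \ls e^{c_{\mathscr{B}}t}\|z(0)\|_H^2$, i.e.\ $(\ref{L})$ with $a(t)=e^{c_{\mathscr{B}}t}$.

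Estimate $(\ref{SI})$ is the heart of the matter. First, the energy identity for the difference reads $E_z(T)+\mathscr{D}_z[0,T]=E_z(0)+\mathscr{R}[0,T]$, where from $(g.1)$ and $(a.1)$ the damping functional obeys
$$\mathscr{D}_z[0,T] \gs ma_0\int_0^T\!\!\int_{L_1}^{L_2}\Bigl[|\bar\varphi_t|^2+|\bar\psi_t|^2+|\bar w_t|^2\Bigr]\dx\dt,$$
while by $(f.1)$-$(f.3)$ and the Lipschitz property on $\mathscr{B}$, $|\mathscr{R}[0,T]| \ls C_T\sup_{[0,T]}\|(\bar\varphi,\bar\psi,\bar w)\|^2$. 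Second, I apply cutoff multipliers $q(x)\bar\varphi_x$, $q(x)\bar\psi_x$, $q(x)\bar w_x$ to the three difference equations, with $q \in W^{1,\infty}(0,L)$ vanishing on a neighborhood of $[L_1,L_2]$ and satisfying $q' \gs 1$ on $(0,L_1)\cup(L_2,L)$. Integration by parts in each equation, summation, and careful handling of the coupling terms involving $k\ell$ and $k_0\ell$ produce the observability-type inequality
$$\int_0^T E_z(t)\dt \ls C\int_0^T\!\!\int_{L_1}^{L_2}\Bigl[|\bar\varphi_t|^2+|\bar\psi_t|^2+|\bar w_t|^2\Bigr]\dx\dt+ C_T\sup_{s\in[0,T]}\|(\bar\varphi(s),\bar\psi(s),\bar w(s))\|^2.$$
Combining this with the energy identity and the trivial bound $\int_0^T E_z(t)\dt \gs TE_z(T)$, for $T$ sufficiently large I obtain
$$\|z(T)\|_H^2 \ls b(T)\|z(0)\|_H^2 + c(T)\sup_{s\in[0,T]}\Bigl[\|\bar\varphi(s)\|^2+\|\bar\psi(s)\|^2+\|\bar w(s)\|^2\Bigr],$$
with $b(T)\to 0$. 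Taking $[\,\cdot\,]_X = \|\cdot\|_{(L^2(0,L))^3}$ on $X=(H^1_0(0,L))^3$, which is compact by Rellich, this is exactly $(\ref{SI})$.

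The main obstacle is the multiplier estimate for the Bresse system without the equal-wave-speeds condition $(\ref{equal-speed})$. The three equations have distinct propagation velocities $k/\rho_1$, $b/\rho_2$, $k_0/\rho_1$, so no single multiplier generates coercivity on all three energies simultaneously, and the lower-order cross terms produced by the $\ell$-coupling must be absorbed carefully. Here the flexibility of the quasi-stability framework is decisive: such residues can be placed into the compact seminorm $c(T)[\,\cdot\,]_X$ rather than fully eliminated. The UCP of Theorem \ref{mainUCP}, already essential for the strictness of the Lyapunov function in Proposition \ref{P-Gradient}, is not invoked again at this step.
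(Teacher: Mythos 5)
Your overall architecture is the right one and matches the paper's at the top level: prove \eqref{L} by Gronwall, then obtain \eqref{SI} from the energy identity plus a spatial multiplier plus a large-time argument, with the lower-order residues absorbed into the compact seminorm and no appeal to the UCP. However, there is a genuine gap in the multiplier step, on two counts. First, your choice of $q$ (vanishing on a neighborhood of $[L_1,L_2]$ with $q'\gs 1$ on $(0,L_1)\cup(L_2,L)$) forces $q(0)<0$ and $q(L)>0$, and the multiplier identity for $u_{tt}-u_{xx}=f$ with $qu_x$ produces the boundary contribution
$$\int_{0}^{T}\Bigl[\tfrac{1}{2}q(L)\,u_x(L,t)^2-\tfrac{1}{2}q(0)\,u_x(0,t)^2\Bigr]\dt,$$
which with your signs is non-negative and sits on the wrong side of the inequality; it is not controlled by anything. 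The correct multiplier — the paper's $\xi$ in \eqref{est1a} — vanishes at $x=0$ and $x=L$ (killing these boundary terms) and instead has its derivative \emph{jump} concentrated inside the damping region, so that $\xi'=\lambda-1$ outside and $\xi'=\lambda$ inside.

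Second, and more importantly, even with the correct first-order multiplier the identity \eqref{est1a} leaves on the right-hand side the \emph{full} localized energy $\int_0^T\!\int_{L_1+\epsilon_0}^{L_2-\epsilon_0}E(t)\,\dx\dt$, including the potential part $b|\tpsi_x|^2+k|\tphi_x+\tpsi+l\tw|^2+k_0|\tw_x-l\tphi|^2$. The damping only gives you control of the localized \emph{velocities}, so your claimed observability inequality with only kinetic terms over $(L_1,L_2)$ on the right does not follow from a single first-order multiplier. This is precisely why the paper inserts a second, zeroth-order multiplier step: multiplying by $\tphi\eta$, $\tpsi\eta$, $\tw\eta$ with $\eta$ a cutoff supported in $(L_1,L_2)$ and equal to $1$ on $(L_1+\epsilon_0,L_2-\epsilon_0)$ yields \eqref{E2}--\eqref{est2}, which converts the localized potential energy into localized kinetic energy (hence into the damping functional via $(g.1)$) plus compact lower-order terms. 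Without this equipartition step your chain of estimates breaks at the observability inequality. A final minor point: because of the polynomial growth in $(f.3)$, the natural compact seminorm produced by the source estimates is the $L^{2p}$ norm, as in the paper, rather than $L^2$; in one dimension either is compact on $H^1_0(0,L)$, but your estimate of the $F_i$ differences should be written so that the residue actually lands in whichever seminorm you declare.
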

\proof The proof is carried out through several energy estimates. In the text that follows, we use the notations
$$
\tilde{v}=v^{1}-v^{2},\,\, G(v)=g_{i}(v^{1})-g_{i}(v^{2})\,\, \mbox{and}\,\, F_i(v)=f_i(v^{1})-f_i(v^{2}),\,\, i= 1,2,3.
$$
 
First, we shall consider the difference of two trajectories with initial data $\Z^{1}_{0},\Z^{2}_{0} \in \mathscr{B}$, 
where $\mathscr{B}$ is a bounded subset of $H$. 
The corresponding solution $S(t)(\Z^{1}_{0}-\Z^{2}_{0})\equiv\Z^{1}-\Z^{2}=(\tphi, \tpsi, \tw,\tphi_{t}, \tpsi_{t}, \tw_{t})$ 
verifies the following problem
\begin{align} \label{B4}
\left\{ 
\begin{array}{rrr}
\begin{split}
\rho_{1}\tphi_{tt} - k(\tphi_{x}+\tpsi+l  \tw)_{x} - k_{0} l  (\tw_{x}-l \tphi) & = - a_{1}(x)G_1(\tphi_t) - F_1(\tphi,\tpsi,\tw), \\
\rho_{2}\tpsi_{tt} - b\tpsi_{xx}+k(\tphi_{x} + \tpsi+l  \tw)
& = - a_{2}(x)G_2(\tpsi_t) - F_2(\tphi,\tpsi,\tw),  \\
\rho_{1}\tw_{tt} - k_{0}(\tw_{x}-l \tphi)_{x} + kl  (\tphi_{x}+\tpsi+l  \tw)
& = - a_{3}(x)G_3(\tw_t) - F_3(\tphi,\tpsi,\tw),   
\end{split}
\end{array}
\right.
\end{align}
with zero Dirichlet boundary conditions and initial conditions $\Z^1_{0}-\Z^2_{0}$.

Second, we let $\epsilon_{0}$  be a positive real number, such that, $\epsilon_{0} \ls (L_{2}-L_{1})$, 
where $(L_{1},L_{2}) = \bigcap_{i} I_{i}$. 
We consider the following real-function $\xi(\cdot)$ defined as follows
 \begin{align*}              
&\xi(x)=\left \{
\begin{array}{l}
\displaystyle{(\lambda -1 )x,\,\, \mbox{if}\,\, x \in [0,L_{1}+\epsilon_{0})} ,\\
\displaystyle{\lambda(x-L_{1}-\epsilon_{0})+(L_{1}-L_{2}+2\epsilon_{0})/(L_{1}+\epsilon_{0}),\,\, \mbox{if}\,\, x \in (L_{1}+\epsilon_{0},L_{2}-\epsilon_{0}]} ,\\
\displaystyle{(\lambda-1)(x-L),\,\, \mbox{if}\,\, x \in [L_{2}-\epsilon_{0},L]}.		                   														                   
\end{array}
\right. .
\end{align*}

\noindent Now, we take $\tphi_{x}\xi,\tpsi_{x}\xi,\tw_{x}\xi$ as multipliers for $(\ref{B4})$. Thus, we find
\begin{align}\label{est1a}
\begin{split}
\frac{1}{2}\int_{0}^{T}\int_{0}^{L}&(\lambda-1)E(t)\,\dx\dt \\
&=-\int_{0}^{L}\Big[\tphi_{t}\tphi_{x}+\tpsi_{t}\tpsi_{x} +\tw_{t}\tw_{x}\Big]\xi\,\dx\Bigl|_{0}^{T}
-\frac{1}{2}\int_{0}^{T}\int_{L_{1}+\epsilon_{0}}^{L_{2}-\epsilon_{0}}E(t)\,\dx\dt\\
&\quad - \int_{0}^{T}\int_{0}^{L}\Big[ k(\tphi_{x}-\tpsi+l  \tw)(\tpsi+l \tw) + k_{0}l (\tw_{x}-l \tphi)\tphi\Big]\xi^{\prime}\,\dx\dt \\
&\quad - \int_{0}^{T}\int_{0}^{L}\Big[a_{1}G_{1}(\tphi_t)\tphi_{x} + a_{2}G_{2}(\tpsi_t)\tpsi_{x}+ a_{3}G_{3}(\tw_t)\tw_{x}\Big]\xi\,\dx\dt   \\
&\quad - \int_{0}^{T}\int_{0}^{L}\Big[ F_1(\tphi,\tpsi,\tw) \tphi_{x} + F_2(\tphi,\tpsi,\tw) \tpsi_{x} + F_3(\tphi,\tpsi,\tw) \tw_{x}\Big]\xi\,\dx\dt, 
\end{split}
\end{align} 
where
$$E(t)= \rho_1|\tilde{\varphi}_{t}|^2 +\rho_2|\tilde{\psi}_{t}|^2+\rho_1|\tilde{w}_{t}|^2 + b|\tpsi_x|^2 + k |\tphi_x + \tpsi +l  \tw|^2+ k_0 |\tw_x - l  \tphi |^2.$$

\noindent Let us estimate the left-hand side of $(\ref{est1a})$. 
Note that, from the definition of energy, we find ${E_{\Z}(t)\!=\!\int_{0}^{L} E(t)\dt\!=\!\|\Z\|^{2}_{H}}$. 
Then, we can show that there exists $c>0$ satisfying 
$$\left|\int_{0}^{L}\Big[\tphi_{t}\tphi_{x}+\tpsi_{t}\tpsi_{x} +\tw_{t}\tw_{x}\Big]\, \xi \dx\right| \ls c \sup_{x\in [0,L]}\{\xi(x)\} {E}_{\Z}(t).$$

\noindent This last implies that
\begin{align}\label{es11}\left|\int_{0}^{L}\Big[\tphi_{t}\tphi_{x}+\tpsi_{t}\tpsi_{x} +\tw_{t}\tw_{x}\Big]\, \xi \dx\Bigl|_{0}^{T}\right|\ls c \sup_{x\in [0,L]}\{\xi(x)\}
({E}_{\Z}(T)+{E}_{\Z}(0)).
\end{align}

\noindent Also using the definition of $E_{\Z}(t)$, one obtains
\begin{align}\label{es22}
\begin{split}
\left|\int_{0}^{T}\int_{0}^{L}\Big[ k(\tphi_{x}-\tpsi+l  \tw)(\tpsi+l \tw) + k_{0}l (\tw_{x}-l \tphi)\tphi\Big]\xi^{\prime}\,\dx\dt \right|\\
\ls \epsilon \int_{0}^{T}\!\! E_{\Z}(t)\dt + c_{\epsilon}\mbox{l.o.t}(\tphi,\tpsi,\tw),
\end{split}
\end{align}
\noindent with lower order terms defined by
$$\mbox{l.o.t}(\tphi,\tpsi,\tw)\equiv \sup_{\sigma \in [0,T]}\Bigl[ \| \tphi(\sigma) \|_{2p}^2
+ \| \tpsi (\sigma)\|_{2p}^2 + \| \tw (\sigma)\|_{2p}^2 \Bigl].$$

\noindent To estimate the damping terms, we use Assumption $(g.1)$ to obtain 
\begin{align*}\begin{split}
\left|\int_{0}^{T}\int_{0}^{L}a_{1}G_1(\tphi_{t})\tphi_{x}\xi\,\dx\dt\right|
&\ls \sup_{x\in [0,L]}\{\xi(x)\}\int_{0}^{T}\int_{0}^{L}a_{1}M|\varphi_{t}^{1}-\varphi_{t}^{2}||\,\tphi_{x}|\,\dx\dt
\\
&\ls \frac{\epsilon}{3} \int_{0}^{T}\!\! E_{\Z}(t)\dt + c_{\epsilon} \int_{0}^{T}\int_{0}^{L}a_{1}\tphi_{t}^{2}\,\dx\dt.
\end{split}
\end{align*}

\noindent This allows us to conclude the following estimate
\begin{align} \label{Gest1}\begin{split}
\left|\int_{0}^{T}\int_{0}^{L}
\Big[a_{1}G_{1}(\tphi_t)\tphi_{x} + a_{2}G_{2}(\tpsi_t)\tpsi_{x}+ a_{3}G_{3}(\tw_t)\tw_{x}\Big]\xi\,\dx\dt \right|
\\ 
 \quad \ls \epsilon \int_{0}^{T}\!\! E_{\Z}(t)\dt 
+ c_{\epsilon}\int_{0}^{T}\int_{0}^{L}\Big[a_{1}\tphi_{t}^{2}+a_{2}\tpsi_{t}^{2}+a_{3}\tw_{t}^{2}\Big] \dx\dt.
\end{split}
\end{align}

\noindent Let us estimate the kinetic energy in (\ref{Gest1}). Assumption $(g.1.)$ implies that 
\begin{align} \label{Gest2}\begin{split}
\int_{0}^{T}\int_{0}^{L}\Big[a_{1}\tphi_{t}^{2}&+a_{2}\tpsi_{t}^{2}+a_{3}\tw_{t}^{2}\Big] \dx\dt\\
&\leqslant c \int_{0}^{T}\int_{0}^{L}\Big[a_{1}G_{1}(\tphi_t)\tphi_{t} + a_{2}G_{2}(\tpsi_t)\tpsi_{t}+ a_{3}G_{3}(\tw_t)\tw_{t}\Big]\dx\dt.
\end{split}
\end{align}

\noindent Next, we estimate the source terms. Invoking Assumption $(f.3)$, we find a positive constant $c$ such that
\begin{align*}
\left|\int_{0}^{T}\int_{0}^{L} F_1(\tphi,\tpsi,\tw)\tphi_{x}\xi\, \dx \dt\right|
& \ls  c\int_{0}^{T}\int_{0}^{L} c(\nabla f_{1}) (|\,\varphi|+|\,\psi|+|\,w|)|\,\varphi_{x}|\, \dx \dt\\
& \ls \epsilon \int_{0}^{T}\!\! E_{\Z}(t)\dt + c_{\epsilon,\mathscr{B}}\mbox{l.o.t}(\tphi,\tpsi,\tw),
\end{align*}
where
$$
c(\nabla f_1)=1 + |\varphi^1|^{p-1} + |\varphi^2|^{p-1}
+ |\psi^1 |^{p-1} + | \psi^2 |^{p-1}
+ |w^1|^{p-1} + | w^2 |^{p-1}.
$$

\noindent The above implies that
\begin{align} \label{Fest11}\begin{split}
\left|\int_{0}^{T}\int_{0}^{L}\Big[ F_1(\tphi,\tpsi,\tw) \tphi_{x} + F_2(\tphi,\tpsi,\tw) \tpsi_{x} + F_3(\tphi,\tpsi,\tw) \tw_{x}\Big]\xi\,\dx\dt\right|\\
\quad \ls \epsilon \int_{0}^{T}\!\! E_{\Z}(t)\dt + c_{\epsilon,\mathscr{B}}\mbox{l.o.t}(\tphi,\tpsi,\tw).
\end{split}
\end{align}

\noindent Next, we combine (\ref{es11})-(\ref{Fest11}) with $(\ref{est1a})$. For sufficiently small $\epsilon >0$, we obtain
\begin{align}\label{est1}
\begin{split}
\int_{0}^{T}E_{\Z}(t)\dt 
\leqslant c\big[E_{\Z}(T)+E_{\Z}(0)\big]  
  + \frac{1}{2}\int_{0}^{T}\int_{L_{1}+\epsilon_{0}}^{L_{2}-\epsilon_{0}}E(t)\,\dx\dt 
  + c_{\mathscr{B}}\mbox{l.o.t}(\tphi,\tpsi,\tw)\\
 \quad +c_{\mathscr{B}} \int_{0}^{T}\int_{0}^{L}\Big[a_{1}G_{1}(\tphi_t)\tphi_{t} + a_{2}G_{2}(\tpsi_t)\tpsi_{t}+ a_{3}G_{3}(\tw_t)\tw_{t}\Big]\dx\dt.
\end{split}
\end{align} 
 
\noindent The next step is to estimate the integral of $E(\cdot)$ over the interval $[L_{1}+\epsilon,L_{2}+\epsilon]$. 
To this end, we consider the function
$[0,1]\ni \eta \in C^{\infty}(0,L)$ defined as follows
\begin{align*}              
&\eta(x)=\left \{
\begin{array}{l}
\displaystyle{\eta(x)=0,\,\, \mbox{if}\,\, x \in (0,L_{1})\cup(L_{2},L)} ,\\
\displaystyle{\eta(x)=1,\,\, \mbox{if}\,\, x \in (L_{1}+\epsilon_{0},L_{2}-\epsilon_{0}).}	                   														                   
\end{array}
\right. .
\end{align*}

\noindent We start multiplying the equations $(\ref{B4})$
by $\tphi \eta$, $\tpsi\eta$ and $\tw \eta$, respectively, 
and after integrate over $[0,T]\times[0,L]$, we add the kinetic energy 
$\int_{0}^{T}\int_{0}^{L}\bigl[\rho_{1}\tphi^{2} + \rho_{2}\tpsi_t^2 + \rho_{1}\tw_t^2\bigl]\eta\, \dx\dt$ to obtain 
\begin{align}\label{E2}
\begin{split}
\int_{0}^{T}&\int_{0}^{L}E(t)\eta\,\dx\dt\\
&=-\int_{0}^{L}\Big[\rho_{1}\tphi_{t}\tphi \!+\! \rho_{2}\tpsi_{t}\tpsi \!+\! \rho_{1}\tw_{t}\tw \Big]\eta \,\dx\Bigl|_{0}^{T} 
  \!+ 2\int_{0}^{T}\int_{0}^{L}\Big[\rho_{1}\tphi^{2} \!+\! \rho_{2}\tpsi_t^2 \!+\! \rho_{1}\tw_t^2\Big]\eta\, \dx\dt\\
&\quad - \int_{0}^{T}\int_{0}^{L}\Big[ k(\tphi_{x}+\tpsi+l  \tw)\tphi+b\tpsi_{x}\tpsi + k_{0}l (\tw_{x}-l \tphi)\tw\Big]\eta^{\prime}\,\dx\dt \\
&\quad - \int_{0}^{T}\int_{0}^{L}\Big[a_{1}G_{1}(\tphi_t)\tphi + a_{2}G_{2}(\tpsi_t)\tpsi+ a_{3}G_{3}(\tw_t)\tw\Big]\eta\,\dx\dt   \\
&\quad - \int_{0}^{T}\int_{0}^{L}\Big[ F_1(\tphi,\tpsi,\tw) \tphi + F_2(\tphi,\tpsi,\tw) \tpsi + F_3(\tphi,\tpsi,\tw) \tw\Big]\eta\,\dx\dt. 
\end{split}
\end{align} 

\noindent We shall estimate the right-hand side of (\ref{E2}). 
To this end, we repeat the pattern of estimates (\ref{es11})-(\ref{Fest11}) to find  
\begin{align}\label{est2}
\begin{split}
\int_{0}^{T}\int_{0}^{L}E(t)\eta\,\dx\dt 
&\leqslant c\big[E_{\Z}(T)+E_{\Z}(0)\big] + c_{\mathscr{B}}\mbox{l.o.t}(\tphi,\tpsi,\tw)\\
& \quad + c_{\mathscr{B}}\int_{0}^{T}\int_{0}^{L}\Big[a_{1}G_{1}(\tphi_t)\tphi_{t} + a_{2}G_{2}(\tpsi_t)\tpsi_{t}+ a_{3}G_{3}(\tw_t)\tw_{t}\Big]\dx\dt.
\end{split}
\end{align}  

\noindent Applying the estimate $(\ref{est2})$ above in $(\ref{est1})$, we obtain 
\begin{align}\label{est3}
\begin{split}
\int_{0}^{T}\!\! E_{\Z}(t)\dt 
&\leqslant c\big[E_{\Z}(T)+E_{\Z}(0)\big] + c_{\mathscr{B}}\mbox{l.o.t}(\tphi,\tpsi,\tw)\\
& \quad +c_{\mathscr{B}}\int_{0}^{T}\int_{0}^{L}\Big[a_{1}G_{1}(\tphi_t)\tphi_{t} + a_{2}G_{2}(\tpsi_t)\tpsi_{t}+ a_{3}G_{3}(\tw_t)\tw_{t}\Big]\dx\dt.
\end{split}
\end{align}  

\noindent Next, we estimate  damping terms on the right-hand side of $(\ref{est3})$.  
Multiply the equations (\ref{B4}) by $\varphi_t$, $\psi_t$, $w_t$, respectively. 
Then we find that
\begin{align} \label{EEnergy}
\begin{split}
\int_{0}^{T}\int_{0}^{L}\Big[a_{1}G_{1}(\tphi_t)\tphi_{t} + a_{2}G_{2}(\tpsi_t)\tpsi_{t}+ a_{3}G_{3}(\tw_t)\tw_{t}\Big]\dx\dt - E_{\Z}(0) + E_{\Z}(T)
\\ 
= - \int_{0}^{T}\int_{0}^{L}\Big[ F_1(\tphi,\tpsi,\tw) \tphi_t  + F_2(\tphi,\tpsi,\tw) \tpsi_t + F_3(\tphi,\tpsi,\tw) \tw_t \Big] \dx \dt.
\end{split}
\end{align}

\noindent Based on  estimate (\ref{Fest11}), we obtain  
\begin{align} \label{Fest112}
\begin{split}
\left|\int_{0}^{T}\int_{0}^{L}\Big[F_1(\tphi,\tpsi,\tw) \tphi_{t} + F_2(\tphi,\tpsi,\tw) \tpsi_{t} + F_3(\tphi,\tpsi,\tw) \tw_{t}\Big]\dx\dt\right|\\
\quad \ls \epsilon \int_{0}^{T}\!\! E_{\Z}(t)\dt + c_{\epsilon,\mathscr{B}}\mbox{l.o.t}(\tphi,\tpsi,\tw).
\end{split}
\end{align}

\noindent Using both $(\ref{EEnergy})$ and $(\ref{Fest112})$, we find 
\begin{align} \label{EEnergy1}
\begin{split}
\int_{0}^{T}\int_{0}^{L}  \Big[a_{1}G_{1}(\tphi_t)\tphi_{t} + a_{2}G_{2}(\tpsi_t)\tpsi_{t}+ a_{3}G_{3}(\tw_t)\tw_{t}\Big]\dx\dt
\\ 
 \leqslant E_{\Z}(0) - E_{\Z}(T) + \epsilon \int_{0}^{T}\!\! E_{\Z}(t)\dt + c_{\epsilon,\mathscr{B}}\mbox{l.o.t}(\tphi,\tpsi,\tw).
\end{split}
\end{align}

\noindent We return to (\ref{est3}) and obtain by use of (\ref{EEnergy1}), with $\epsilon>0$ small enough, the following estimate 
\begin{equation} \label{resumo}
\int_{0}^{T} \!\! E_{\Z}(t) \dt \leqslant  (c-c_{\mathscr{B}}) E_{\Z}(T)+(c+c_{\mathscr{B}})E_{\Z}(0)+ c_{\mathscr{B}}\mbox{l.o.t}(\tphi,\tpsi,\tw).
\end{equation}

\noindent The next step is to estimate the energy $E_{\Z}(\cdot)$. To this end, we use the multipliers $\varphi_t$, $\psi_t$, $\tw_t$ for $(\ref{B4})$. 
Then, after integration, we find
\begin{align}\label{123}
\begin{split} 
T E_{\Z}(T)
 =  \int_{0}^{T}\!\! E_{\Z}(t)\dt
- \int_{0}^{T}\int_{s}^{T}\int_{0}^{L}\Big[a_{1}G_{1}(\tphi_t)\tphi_{t} + a_{2}G_{2}(\tpsi_t)\tpsi_{t}+ a_{3}G_{3}(\tw_t)\tw_{t}\Big]\dx\dt{\rm d}s  \\
- \, \int_{0}^{T}\int_{s}^{T}\int_{0}^{L} \Big[F_1(\tphi,\tpsi,\tw) \tphi_{t} + F_2(\tphi,\tpsi,\tw) \tpsi_{t} + F_3(\tphi,\tpsi,\tw) \tw_{t}\Big]\dx\dt{\rm d}s.
\end{split}
\end{align}

\noindent Now, the forcing assumptions give
\begin{align*}
\begin{split}
\int_{0}^{L}\Big[F_1(\tphi,\tpsi,\tw) \tphi_{t} + F_2(\tphi,\tpsi,\tw) \tpsi_{t} + F_3(\tphi,\tpsi,\tw) \tw_{t}\Big]\dx
\ls \frac{1}{T}E_{\Z}(t)+ c_{T,\mathscr{B}}\mbox{l.o.t}(\tphi,\tpsi,\tw).
\end{split}
\end{align*}

\noindent The above implies
\begin{align*} 
\int_{0}^{T}\int_{s}^{T}\int_{0}^{L} \Big[F_1(\tphi,\tpsi,\tw) \tphi_{t} + F_2(\tphi,\tpsi,\tw) \tpsi_{t} + F_3(\tphi,\tpsi,\tw) \tw_{t}\Big]\dx\dt{\rm d}s\\
\ls \int_{0}^{T}E_{\Z}(t)\dt+ c_{T,\mathscr{B}}\mbox{l.o.t}(\tphi,\tpsi,\tw).
\end{align*}

\noindent We combine the above estimates with (\ref{123})
\begin{equation} \label{A21}
T E_{\Z}(T) \ls 2 \int_{0}^{T}E_{\Z}(t)\dt + c_{T,\mathscr{B}}\mbox{l.o.t}(\tphi,\tpsi,\tw). 
\end{equation}

\noindent Next, we using estimate (\ref{resumo}) in (\ref{A21}) we arrive at 
$$
T E_{\Z}(T) \leq 2(c-c_{\mathscr{B}}) E_{\Z}(T)+ 2(c+c_{\mathscr{B}})E_{\Z}(0)+ c_{T,\mathscr{B}}\mbox{l.o.t}(\tphi,\tpsi,\tw).
$$

\noindent Taking $T> 4c$, we find
$$
E_{\Z}(T) \leq \frac{2(c+c_{\mathscr{B}})}{T-2(c-c_{\mathscr{B}})}E_{\Z}(0)+ c_{T,\mathscr{B}}\mbox{l.o.t}(\tphi,\tpsi,\tw). $$

\noindent Using standard stabilization arguments, we obtain the existence of positive constants $c_{1}= c_\mathscr{B}$ and $ \omega = \omega_\mathscr{B}$ such that
$$
\|\Z(t)\|^{2}_{H} \leq c_{1} \|\Z(0)\|^{2}_{H} e^{-\omega t} + c_{1}\sup_{\sigma \in [0,t]}\Bigl[ \| \varphi(\sigma) \|_{2p}^2
+ \| \psi (\sigma)\|_{2p}^2 + \| w (\sigma)\|_{2p}^2 \Bigl].
$$
Therefore, the inequality $(\ref{SI})$ holds with  
$X = [H^{1}_{0}(0,L)]^{3}$, $Y = [L^{2}(0,L)]^{3}$, $b(t) =c_{1} e^{-\omega t}$, $c(t) = c_{1}$ 
and with compact semi-norm 
$$
[(\tphi,\tpsi,\tw) ]_{X}^{2} = \|\tphi\|_{2p}^2+\|\tpsi\|_{2p}^2+\|\tw\|_{2p}^2.
$$
\qed

\subsubsection{Proof of the main result - Theorem \ref{Main}}

\noindent \textit{Proof of Theorem} \ref{Main}: From Proposition \ref{P-Gradient} and Proposition \ref{P-Quasi}, 
we have that $(H,S(t))$  is a gradient quasi-stable system. Moreover, by inequality (\ref{energia-domina}) one can see that the
Lyapunov function defined as the energy $\mathscr{E}_{\Z}(\cdot)$ satisfies the following: 
(i) $\Phi(\cdot)$ is bounded from above on any bounded set and 
(ii) the $\Phi(R) = \big\{ \normalfont{\Z}\in H\,|\, \Phi(\Z)\ls R \big\}$ is bounded for every $R$. 
To conclude the proof, we note that if $\Z \in \mathscr{N}$, 
then $\Z=(\varphi,\psi,w,0,0,0)$ solves the stationary problem
\begin{align} \label{PE01}
\left\{ 
\begin{array}{rrr}
\begin{split}
- k(\varphi_{x}+\psi+l  w)_{x} - k_{0}l  (w_{x}-l \varphi) + f_1(\varphi,\psi,w) =0, &  \\
- b\psi_{xx}+k(\varphi_{x} + \psi+l  w)+ f_2(\varphi,\psi,w) = 0, &  \\
- k_{0}(w_{x}-l \varphi)_{x} + kl (\varphi_{x}+\psi+l  w) + f_3(\varphi,\psi,w) =0. &   
\end{split}
\end{array}
\right.
\end{align}

Multiplying in $L^{2}(0,L)$ the equations in $(\ref{PE01})$ by $(\varphi,\psi,w)$, we find
\begin{align*}
b\|\psi_{x}\|^{2} &+k\|\varphi_{x} + \psi+l w\|^{2}+k_{0}\|w_{x} + l\varphi\|^{2} =  - \int_{0}^{L}\nabla F(u,v,w) \! \cdot \! (u,v,w)  \dx.
\end{align*}
Now, we use Assumption (f.1) to show
$$
\left[ 1 - \frac{2\alpha\beta L^2}{\pi^2} \right] \Big[  \| \varphi_{x} \|^{2} + \|\psi_{x}\|^{2} + \| w_x \|^{2} \Big] \ls   2 \beta c_F L. 
$$
Therefore, the set of stationary solutions $\mathscr{N}$ is bounded. This completes the proof.

\appendix
\section*{Appendix: Well-possednes for overdetermined wave equations}
\renewcommand{\theTheorem}{A.\arabic{Theorem}}
\renewcommand{\theequation}{A.\arabic{equation}}
\renewcommand{\thesection}{A}

In this appendix we will guarantee the well-posedness for the system presented in \eqref{P} with overdetermined condition.

\begin{Theorem} \label{apen1}
Let $L>0$ and $T>0$ large enough. 
If the Problem \eqref{P}-\eqref{initial} satisfies \eqref{delta-i}-\eqref{Ei} with supplementary condition 
	\begin{equation} \label{over}
	(u_1,\cdots,u_n) = (0,\cdots,0)\ \mbox{in} \  \omega \times [0,T],  
	\end{equation}
with $\omega \subset [0,L_0]$ as in \eqref{omega}. 
Then, the overdetermined problem is well-posed and generates a strongly continuous semigroup over the Hilbert space
\begin{align*}
H_{\omega_j}(M_{i}) \equiv \left\{
 (u_1,\cdots,u_n,v_1,\cdots,v_n)  \left|
\begin{array}{l}
  u_{i} \in H^1_0 (M_{i}),\ v_{i} \in L^{2} (M_{i}), \\
  u_{i}=v_{i}=0 \ \mbox{in} \ \omega, i=1,\cdots,n
\end{array}
\right.
\right\},
\end{align*}
where $M_i=([0,L],g_i)$.
\end{Theorem}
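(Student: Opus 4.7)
The plan is to exploit the overdetermination to decouple the problem into two standard coupled wave systems posed on the sub-intervals adjacent to $\omega$, apply classical semigroup theory on each, and then glue them into a single strongly continuous semigroup on $H_\omega(M_i)$. Writing $\omega = (\alpha, \beta) \cap [0,L]$ with $\alpha = L_0 - \varepsilon/2$ and $\beta = L_0 + \varepsilon/2$, I set $M_i^L = ([0,\alpha], g_i)$ and $M_i^R = ([\beta, L], g_i)$. The constraint $(u_1,\dots,u_n) \equiv 0$ on $\omega \times [0,T]$ automatically supplies zero Dirichlet data at the new interior endpoints $\alpha, \beta$, so on each of $M_i^L, M_i^R$ the system \eqref{P} becomes a coupled wave system with full Dirichlet boundary conditions, now posed on a shorter interval.

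On the product energy spaces
\begin{equation*}
\mathcal{H}^L = \prod_{i=1}^n \bigl(H^1_0(M_i^L) \times L^2(M_i^L)\bigr), \quad \mathcal{H}^R = \prod_{i=1}^n \bigl(H^1_0(M_i^R) \times L^2(M_i^R)\bigr),
\end{equation*}
equipped with the inner products induced by the metrics $g_i$, the uncoupled wave operator $\mathcal{A}_0(u,v) = (v, \Delta_i u)$ is skew-adjoint (boundary terms in the integration by parts vanish by the Dirichlet conditions) and, by Stone's theorem, generates a unitary $C_0$-group. Treating the coupling $f_i$ as a perturbation controlled by \eqref{F}, a Duhamel iteration based on the variation-of-parameters formula produces, for each initial datum, a unique weak solution on $[0,T]$ and yields $C_0$-evolution families $\{T^L(t)\}$ on $\mathcal{H}^L$ and $\{T^R(t)\}$ on $\mathcal{H}^R$. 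The restriction
\begin{equation*}
(u,v) \mapsto \bigl((u|_{[0,\alpha]}, v|_{[0,\alpha]}), (u|_{[\beta,L]}, v|_{[\beta,L]})\bigr)
\end{equation*}
is an isomorphism of $H_\omega(M_i)$ onto $\mathcal{H}^L \times \mathcal{H}^R$, with extension by zero on $\omega$ as its inverse, since the elements of $H_\omega$ are precisely those tuples vanishing on $\omega$. Transporting the product evolution $T^L(t) \oplus T^R(t)$ through this isomorphism yields the desired strongly continuous semigroup on $H_\omega(M_i)$; by construction its trajectories vanish on $\omega \times [0,T]$ and solve \eqref{P} weakly on each sub-interval, hence solve the full overdetermined problem.

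The main obstacle is the bounded-perturbation step. Since the coefficients $p_j^i, q_j^i$ appearing in \eqref{f_i} lie only in $L^2(0,T;L^2)$ and not in $L^\infty$, the classical Kato--Rellich theorem does not apply directly and one must instead exploit the energy estimate \eqref{F} in a Gronwall-type closure: testing the system against the velocity, integrating in time, and using \eqref{F} furnishes an a priori energy bound, uniform on $[0,T]$ with constants depending on $T$. Uniqueness follows from linearity, and strong continuity at $t=0$ is inherited from the unitarity of the unperturbed wave group together with the vanishing of the Duhamel integral at $t=0$.
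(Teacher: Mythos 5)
Your argument is correct, and it reaches the same destination as the paper but by a more concrete route. The paper never splits the interval: it keeps everything on $[0,L]$, writes the first-order system $\partial_t \Z + A\Z = \mathscr{F}(\Z)$, builds the constraint $u_i=v_i=0$ on $\omega$ directly into the domain $D(A)=[V_1(M_i)]^n\times[V_2(M_i)]^n$, and then simply invokes ``classical semigroup theory'' to get generation on $H_\omega(M_i)$, without verifying the generator property. Your explicit decomposition $[0,L]\setminus\omega = [0,\alpha]\cup[\beta,L]$, with the overdetermination supplying Dirichlet data at $\alpha,\beta$ (legitimate in one dimension, since $H^1_0$ functions are continuous and the restriction/zero-extension map is indeed an isometric isomorphism onto $\mathcal{H}^L\times\mathcal{H}^R$), is precisely the content that the paper leaves implicit, and it buys something real: maximal dissipativity of the constrained operator is transparent for the direct sum of two Dirichlet wave operators, whereas the paper's literal domain requires global $H^2(0,L)$ regularity, which solutions of the resolvent equation on the broken domain need not have (their spatial derivative may jump at $\partial\omega$); your ``broken'' Sobolev formulation is the one under which generation actually holds, so your version is if anything more careful. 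Your treatment of the coupling via Duhamel and \eqref{F} rather than Kato--Rellich is also the right move given that $p^i_j,q^i_j$ are only $L^2$ in space-time, though note that time-dependent coefficients strictly yield an evolution family rather than a semigroup --- a terminological looseness you inherit from the paper's own statement rather than introduce.
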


\begin{proof}
First, for the state vector $\Z(t) \equiv (u_1,\cdots, u_n, \partial_t u_1 ,\cdots, \partial_t u_n)^{\top}$, 
the Problem \eqref{P}-\eqref{initial} is equivalent to the following vectorial Cauchy problem
\begin{equation} \label{sis}
\partial_t \Z(t) + A \Z(t)=\mathscr{F}(\Z(t)), \ \ \Z(0)= (u^0_1,\cdots, u^0_n, u^1_1,\cdots, u^1_n)^{\top},
\end{equation}
with operators defined by 
\begin{align*}
A=\begin{bmatrix}
0 & -I|_{V_2(M_i)}  \\
B|_{V_1(M_i)} & 0  \\
\end{bmatrix}, \ 
B=\begin{bmatrix}
-\Delta_1 & 0 & \cdots & 0  \\
0 & -\Delta_2  & \cdots & 0  \\
\vdots & \vdots  &  \cdots &  \vdots \\
0& 0 & 0 & -\Delta_n \\
\end{bmatrix}, 
\ I=(\delta_{ij})_{n\times n} 
\end{align*}
and 
$$
\mathscr{F}(\Z)=\begin{bmatrix}
0  \\
\vdots  \\
0 \\
f_1(u_1,\cdots,u_n) \\
\vdots  \\
f_n(u_1,\cdots,u_n) \\
\end{bmatrix}.
$$
The domain of operator $A$ is defined by $D(A)=[V_1(M_i)]^{n}\times [V_2(M_i)]^{n}$ where
\begin{align*}
V_1(M_i)& \equiv \bigl\{ (u_1,\cdots,u_n) \in D(B) \ | \ (u_1,\cdots,u_n)=(0,\cdots,0) \ \text{in $\omega$} \bigl\}, \\
V_2(M_i)& \equiv \bigl\{ (v_1,\cdots,v_n) \in D(B^{\frac{1}{2}}) \ | \ (v_1,\cdots,v_n)=(0,\cdots,0)  \ \text{in $\omega$} \bigl\}
\end{align*}
and
$$
D(B)=D(-\Delta_1)\times \cdots \times D(-\Delta_n)=H^2(M_1)\cap H^1_0(M_1) \times \cdots H^2(M_n)\cap H^1_0(M_n).
$$
 
\noindent The finite energy space for (\ref{sis}) is the Hilbert space defined by  
$$
H_{\omega}(M_i) \equiv 
\bigl\{(u_1,\cdots,u_n,v_1,\cdots,v_n) \in {H}(M_i) \ | \ u_i=v_i=0  \ \text{in} \  \omega, \ \forall i=1,\cdots,n\bigl\},
$$
where
$$
H(M_i)\equiv [D(B^{\frac{1}{2}})]^{n} \times L(M_i) \ \mbox{and} \  L(M_i)\equiv L^2(M_1) \times \cdots \times L^2(M_n) .
$$

Using classical semigroup theory, one can establish existence and uniqueness of a solution to the Cauchy problem \eqref{sis}. 
Moreover, the solution operator generates a strongly continuous semigroup
$$
T_{M_i}(t): {H}_w(M_i) \to {H}_w(M_i), 
$$
defined by
$$
(u^0_1,\cdots,u^0_n,u^1_1,\cdots,u^1_n) \mapsto (u_1(t),\cdots,u_n(t),\partial_t u_1(t),\cdots,\partial_t u_n(t)), \ \ t\ge 0,
$$ 
where $(u_1,\cdots,u_n,\partial_t u_1,\cdots,\partial_t u_n)$ is the weak solution corresponding to  the initial data 
$$
(u^0_1,\cdots,u^0_n,u^1_1,\cdots,u^1_n).
$$
In addition, $\{T_{M_i}(t)\}_{t\ge 0 }$ is also strongly continuous semigroup on ${H}_\omega(M_i)$ 
satisfying the compatibility condition \eqref{over}.

\begin{Remark} \label{obskey} It is not difficult to show that if \eqref{over} is fulfilled, then we also have
		\begin{equation*}
	(\partial_t u_1,\cdots,\partial_t u_n) = (0,\cdots,0) \quad \hbox{in} \; \omega \times [0,T].
	\end{equation*}
\end{Remark}

\end{proof}


\paragraph*{Funding:} The first author is partially supported by CNPq grant 312529/2018-0.  
The third author is supported by INCTMat-CAPES grant 88887.507829/2020-00.


\paragraph{Email addresses}  

\begin{itemize} 
	\item T. F. Ma: matofu@mat.unb.br 
	
	\item R. N. Monteiro: monteirorn@uel.br
	
	\item P. N. Seminario-Huertas: pseminariohuertas@gmail.com
	
\end{itemize}

\end{document}